\numberwithin{equation}{section}
\newtheorem{thm}{Theorem}[section]
\newtheorem{lem}[thm]{Lemma}
\newtheorem{cor}[thm]{Corollary}
\newtheorem{rem}[thm]{Remark}
\newtheorem{bthm}{Theorem}
\newtheorem{athm}{Theorem}
\theoremstyle{definition}
\newcommand{\rn}{\Bbb R^N_+}
\newcommand{\e}{\varepsilon}
\begin{document}
\title{A Nonlinear Elliptic PDE with Two Sobolev-Hardy\\Critical Exponents}

\author{YanYan Li\thanks{Partially
 supported by
        NSF grant DMS-0701545.} and Chang-Shou Lin}

\date{}

\maketitle

\abstract{
In this paper, we consider the following PDE involving  two Sobolev-Hardy critical exponents,

\begin{equation}
\label{0.1}
\left\{
\begin{aligned}
& \Delta u + \lambda\frac{u^{2^*(s_1)-1}}{|x|^{s_1}} + \frac{u^{2^*(s_2)-1}}{|x|^{s_2}} =0 \,\, \text{in } \Omega, \\
& u=0 \qquad  \text{ on } \Omega,
\end{aligned}
\right.
\end{equation}
where $0 \le s_2 < s_1 \le 2$, $0 \ne \lambda \in \Bbb R$ and $0 \in \partial \Omega$. 
The existence (or nonexistence) for least-energy solutions has been extensively studied when $s_1=0$ or $s_2=0$. In this paper, 
we prove that
 if
$0< s_2 < s_1 <2$ and the mean curvature
of $\partial \Omega$ at $0$
 $H(0)<0$, then \eqref{0.1} has a least-energy solution.  Therefore, this paper has completed the study of \eqref{0.1} for the least-energy solutions. We also prove existence or nonexistence of positive entire solutions of \eqref{0.1} with $\Omega =\rn$
under different situations of  $s_1, s_2$ and $\lambda$. 

}

\section{Introduction}
\hspace{.6cm}
Let $0\leq s\leq 2$, $2^*(s)=\frac{2(N-s)}{N-2}$ and $L^{2^*(s)}(\frac{dx}{|x|^s})$ denote the space of $f$ with $\int|f|^{2^*(s)}\frac{dx}{|x|^s}<+\infty$.
It is well known that the inclusion $H^1_0(\Omega)\hookrightarrow L^{2^*(s)}(\frac{dx}{|x|^s})$ is a family of non-compact embeddings.
In this paper, we want to study the combined effect of two such Sobolev-Hardy critical exponents on a nonlinear partial differential equation.
More precisely, we consider
\begin{equation}\label{1equation}
\begin{cases}
&\Delta u+\lambda\frac{u^{2^*(s_1)-1}}{|x|^{s_1}}+\frac{u^{2^*(s_2)-1}}{|x|^{s_2}}=0\quad\text{in \,}\Omega,\\
&u(x)>0\quad\text{in \,}\Omega\quad u(x)=0\quad\text{on \,}\partial\Omega,
\end{cases}
\end{equation}
where $0\leq s_2<s_1\leq 2$ and $\lambda\in\Bbb R$. Throughout the paper, $\Omega$ is a bounded smooth domain in $\Bbb R^N$ with $0\in\partial\Omega$.

\medskip

Our motivation for studying equation\,\eqref{1equation} comes from the celebrated  Caffarelli-Kohn-Nirenberg inequality\,\cite{CKN}:
there exists a constant $C$ such that for any $u\in C_0^\infty(\Bbb R^N)$, the inequality
\begin{equation*}
\int_{\Bbb R^N}|x|^{-bq}u^qdx\leq C\int_{\Bbb R^N}|x|^{-2a}|\nabla u|^2dx
\end{equation*}
holds, where $-\infty<a<\frac{N-2}{2}$, $0\leq b-a\leq 1$ and $q=\frac{2N}{N-2+2(b-a)}$.
Let $D_a^{1,2}(\Omega)$ be the completion of $C_0^\infty(\Omega)$ with the norm $\|u\|_a^2=\int_{\Bbb R^N}|x|^{-2a}|\nabla u|^2dx$, and set
\begin{equation*}
S(a,b;\Omega)=\inf_{u\in D^{1,2}_a(\Omega)\setminus\{0\}}\frac{\int_\Omega |x|^{-2a}|\nabla u|^2dx}{\left(\int_\Omega |x|^{-bq}|u|^q dx\right)^{\frac{2}{q}}}.
\end{equation*}
Naturally, we ask whether the best constant $S(a,b;\Omega)$ can be attained by some $u\in D^{1,2}_a(\Omega)\setminus\{0\}$.
For the past twenty years, this problem has been extensively studied. For recent development, we refer the readers to \cite{BPZ,CH,CW1,CLW,CC,TE,GK,GR,L,LIN,LW,HL} and the references therein.

When $0\in\partial\Omega$, this problem was first studied by Ghoussoub-Kang\,\cite{GK} and Ghoussoub-Robert\,\cite{GR}, also see \cite{CLW}.
In \cite{CLW}, among other things, Chern and the second author of this paper  proved the following theorem.

\begin{athm}\label{thmathm}
Suppose $0\in\partial\Omega$ and the mean curvature
 $H(0)<0$.
Then the best constant $S(a,b;\Omega)$ can be achieved in $D^{1,2}_a(\Omega)$ if $a, b, q$ satisfy one of the following conditions:

\medskip

\noindent
(i) $a<b<a+1$ and $N\geq 3$,

\medskip

\noindent
(ii) $b=a>0$ and $N\geq 4$.
\end{athm}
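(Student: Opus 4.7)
The plan is to follow the classical concentration-compactness plus test-function scheme for attainment of best constants at the critical level. The key reduction is: if
\[
S(a,b;\Omega) < S(a,b;\rn),
\]
then the infimum $S(a,b;\Omega)$ is attained. Indeed, any minimizing sequence $\{u_n\}\subset D^{1,2}_a(\Omega)$, after suitable rescaling, either converges strongly to a minimizer or concentrates at a point; since $\partial\Omega$ is smooth away from $0$ and the weights $|x|^{-2a}$, $|x|^{-bq}$ are regular there, concentration can only occur at $0\in\partial\Omega$. A Lions-type analysis then shows that the concentration profile is a minimizer for $S(a,b;\rn)$ (after blow-up and flattening), so that concentration forces $S(a,b;\Omega)\ge S(a,b;\rn)$. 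This contradicts the strict inequality, and compactness follows.

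\medskip
It therefore suffices to produce a test function $u_\e\in D^{1,2}_a(\Omega)\setminus\{0\}$ whose Rayleigh quotient is strictly smaller than $S(a,b;\rn)$. Under either condition (i) or (ii), the infimum on $\rn$ is known to be attained by an explicit extremal $U$ (positive, rotationally symmetric in the variables tangential to $\partial\rn$, and with controlled decay both near $0$ and at infinity). Take a smooth cutoff $\eta$ supported near $0\in\partial\Omega$, let $\Psi$ be a smooth diffeomorphism flattening $\partial\Omega$ at $0$ with $\Psi(0)=0$ and $D\Psi(0)=\mathrm{Id}$ (so that the second-order part of $\Psi$ encodes the second fundamental form of $\partial\Omega$ at $0$), and set
\[
u_\e(x) = \eta(x)\, \e^{-(N-2-2a)/2}\, U\!\left(\Psi(x)/\e\right).
\]

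\medskip
The heart of the proof is an asymptotic expansion of the numerator and denominator of the Rayleigh quotient of $u_\e$ as $\e\to 0^+$. The $O(1)$ term reproduces $S(a,b;\rn)$ exactly. The next term is obtained by expanding the Jacobian and the metric induced by $\Psi$, in which the mean curvature $H(0)$ appears as the coefficient of the quadratic correction. After integration by parts and using the Euler-Lagrange equation satisfied by $U$ on $\rn$, the expansion takes the form
\[
\frac{\int_\Omega |x|^{-2a}|\nabla u_\e|^2\,dx}{\left(\int_\Omega |x|^{-bq}|u_\e|^q\,dx\right)^{2/q}} \;=\; S(a,b;\rn) \;+\; c(a,b,N)\,H(0)\,\e^{\theta} \;+\; o(\e^{\theta}),
\]
with a positive constant $c(a,b,N)>0$ and an exponent $\theta=\theta(a,b,N)>0$. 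Since $H(0)<0$, this proves $S(a,b;\Omega)<S(a,b;\rn)$.

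\medskip
The main obstacle is the test-function calculation in the last step: one must verify that the curvature-driven correction really is of lower order than the remainder coming from the cutoff $\eta$ and from the nonlinear denominator, and that the corresponding integrals converge. The dimension restrictions enter precisely here. In case (i) $a<b<a+1$, the weight $|x|^{-bq}$ improves integrability at infinity so that $N\ge 3$ suffices. In case (ii) $b=a>0$, the denominator carries the pure Sobolev exponent $q=2^*$, $U$ decays more slowly, and one needs $N\ge 4$ to keep the curvature integral finite and dominant. Handling these two regimes uniformly---particularly identifying $\theta$ correctly and controlling the tails of $U$ against the Hardy weight near $0$---is the technically delicate part of the argument.
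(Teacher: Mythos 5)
Your overall strategy---reduce attainment to the strict energy inequality $S(a,b;\Omega)<S(a,b;\rn)$, then produce a rescaled, cut-off, boundary-flattened copy of an extremal on $\rn$ and expand the Rayleigh quotient to extract a first-order curvature correction proportional to $H(0)$---is the correct one and is in the same family as the paper's argument (Lemma~4.1 carries out precisely this kind of expansion for the related Theorem~1.1). However, the route differs from the paper's in one substantive way: the paper, following \cite{CLW}, first performs the change of dependent variable $u(x)=|x|^{-a}v(x)$, which converts the weighted Dirichlet energy $\int|x|^{-2a}|\nabla u|^2$ into the unweighted $\int|\nabla v|^2-\lambda\int v^2/|x|^2$ with $\lambda=a(N-2-a)$ and $s=(b-a)q$, so that $S(a,b;\Omega)=S_\lambda(\Omega)$ as in~\eqref{2equation} and the extremal problem becomes the boundary-value problem~\eqref{1equation} with $s_1=2$. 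Your proposal works directly with the CKN quotient. The transformation is not merely cosmetic: in your direct approach the boundary-flattening diffeomorphism $\Psi$ simultaneously distorts the gradient, the weight $|x|^{-2a}$ in the numerator, and the weight $|x|^{-bq}$ in the denominator, and one must combine three separate $O(\e)$ corrections and show their sum has a definite sign; after the paper's transformation the only weighted terms are zero-order (a Hardy potential and the nonlinearity), and the $O(\e)$ correction comes out as a clean integration-by-parts identity with manifestly positive coefficient $K_1=\int_{\Bbb R^{N-1}}|\partial_N v(y',0)|^2|y'|^2\,dy'$, plus sub-leading contributions $K_2,K_3$ whose finiteness is guaranteed by the known decay of $v$. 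This is where your sketch is thinnest: you assert $c(a,b,N)>0$ without indicating why the three weighted corrections cannot conspire to cancel or flip sign, and this is exactly the part the restrictions $N\ge 3$ (resp.\ $N\ge4$) are designed to protect.

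One further point your compactness step glosses over: in case~(ii) with $b=a$ the exponent is $q=2^*$ and concentration can occur at \emph{interior} points, not only at $0$; ruling this out requires the additional inequality $S(a,a;\rn)<S_N$ (which does hold when $a>0$, $N\ge4$, essentially by Musina's result \cite{MU}, since the Hardy term with $\lambda=a(N-2-a)>0$ strictly lowers the energy). In case~(i), $a<b$ makes $q<2^*$, so the embedding is locally compact away from the origin and your ``concentration only at $0$'' claim is fine. You should flag this dichotomy explicitly, since it is part of why the two cases carry different hypotheses.
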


When $a=0$ and $0<b<1$, Theorem\,\ref{thmathm} was first proved by Ghoussoub and Robert\,\cite{GR}.
The proof of Theorem\,\ref{thmathm} in \cite{CLW} is to make use of a transformation: $u(x)=|x|^{-a}v(x)$.
Straightforward computations give
\begin{equation*}
\int_\Omega |x|^{-2a}|\nabla v|^2 dx=\int_\Omega |\nabla u|^2dx-\lambda\int_\Omega\frac{u^2}{|x|^2}dx,
\end{equation*}
where $\lambda=a(N-2-a)$. Then $S(a,b;\Omega)$ is equal to the following best constant:
\begin{equation}\label{2equation}
S_\lambda(\Omega)=\inf_{u\in H^1_0(\Omega)\setminus\{0\}}
\frac{\int_\Omega |\nabla u|^2dx-\lambda\int_\Omega\frac{|u|^2}{|x|^2}dx}{\left(\int_\Omega\frac{|u|^{2^*(s)}}{|x|^s}dx\right)^{\frac{2}{2^*(s)}}},
\end{equation}
where $\lambda=a(N-2-a)$ and $s=(b-a)q\in[0, 2)$ if $b<a+1$. Note that if $b=a+1$, thus $s=2$ and the question for the best constant is a linear problem. Hence,
we always exclude the case $b=a+1$. By \eqref{2equation}, Theorem\,\ref{thmathm} is equivalent to saying that
equation\,\eqref{1equation} has a solution provided that either (i) $N\geq 3$, $\lambda<(\frac{N-2}{2})^2$, $0<s_2<s_1=2$, or
(ii) $N\geq 4$, $0<\lambda<(\frac{N-2}{2})^2$, $s_1=2$ and $s_2=0$.

To study equation \eqref{1equation}, we consider the nonlinear functional $\varPhi$:
\begin{equation*}
\varPhi(u)=\frac{1}{2}\int_\Omega|\nabla u|^2 dx-\frac{\lambda}{p_1+1}\int_\Omega\frac{(u^+)^{p_1+1}}{|x|^{s_1}}dx-\frac{1}{p_1+1}\int_\Omega\frac{(u^+)^{p_2+1}}{|x|^{s_2}}dx
\end{equation*}
for $u\in H^1_0(\Omega)$, where for the simplicity of notations, we let $p_1=2^*(s_1)-1$ and $p_2=2^*(s_2)-1$.
It is easy to see that there is positive constants $\rho_0, c_0>0$ such that
\begin{equation*}
\varPhi(u)\geq c_0\text{ \,if \,}\|u\|_{H^1_0}=\rho_0.
\end{equation*}
Note that $p_2 > p_1$ because $s_1 > s_2$. Thus, no matter what 
the sign of $\lambda$ is, there is $u_0 \in H^1_0(\Omega)$ such that 
$\Phi(u_0) \le 0$. 
Set
\begin{equation}\label{4equation}
c_*=\inf_{P\in{\cal P}}\max_{w\in P}\varPhi(w),
\end{equation}
where ${\cal P}$ is the class of continuous paths in $H^1_0(\Omega)$ connecting $0$ and $u_0$.
We note that since $p_2>p_1$, the function $t\to\varPhi(tu)$ has the unique maximum for $t\geq 0$.
Furthermore, we have
 $$c_*=\displaystyle\inf_{u\in H^1_0(\Omega)}\max_{u\geq 0, \,u\not\equiv 0}\varPhi(tu).$$
It is well-known that due to the non-compact embedding of $H^1_0\hookrightarrow L^{2^*(s)}(\frac{dx}{|x|^s})$,
$\varPhi$ does not satisfy the Palais-Smale condition. Therefore, in general $c_*$ might not be a critical value for $\varPhi$.
As usual, if $c_*$ is a critical value, and $u$ is a critical point of $\varPhi$ with $\varPhi(u)=c_*$,
then $u$ is called a least-energy solution.

\medskip

 When
 $s_2=0$, equation\,\eqref{1equation} becomes
\begin{equation}\label{3equation}
\Delta u+\lambda\frac{u^{2^*(s_1)-1}}{|x|^{s_1}}+u^{\frac{N+2}{N-2}}=0\quad\text{in \,}\Omega.
\end{equation}
\noindent
When $\lambda<0$, the best constant $S_\lambda(\Omega)$ of \eqref{2equation} always satisfies
\begin{equation*}
S_\lambda(\Omega)=S_0(\Omega)=S_N,
\end{equation*}
where $S_N$ is the Sobolev best constant. 
Thus, $S_{\lambda}(\Omega)$ can not be attained in $H^1_0(\Omega)$, and as a consequence, 
  $c_*$ could not be a critical value of $\varPhi$.
 In fact, for $0\leq s_1<2$, it is not difficult to see that the constant $c_*$ of \eqref{4equation}
is always equal to $\frac{1}{N}S_N^{\frac{N}{2}}$ and $c_*$ is never a critical value for $\varPhi$. 
Thus, there exist no least-energy solutions for equation\,\eqref{3equation} when $\lambda<0$. However,  when $\lambda>0$, $0<s_1<2$ and $s_2=0$, the following theorem was proved in \cite{HLW}.

\begin{bthm}
 Suppose $N\geq 4$ and $0\in\partial\Omega$ with $H(0)<0$.
Then equation\,\eqref{3equation} has a solution, provided that $\lambda>0$, and $0 < s_1 \le 2$.
\end{bthm}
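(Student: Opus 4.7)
The plan is to obtain a positive solution by realizing $c_*$ as a critical value of $\varPhi$ through the mountain-pass theorem. Since, as already visible for $\lambda<0$, the non-compact embedding $H^1_0(\Omega)\hookrightarrow L^{2^*(s)}(|x|^{-s}dx)$ prevents a global Palais--Smale condition, the first step is a quantitative compactness statement: a Brezis--Lieb decomposition together with a localized concentration analysis shows that every Palais--Smale sequence with energy strictly below a certain threshold $c^*>0$ is relatively compact. Here $c^*$ is the smallest possible bubble energy, the admissible profiles being either standard Aubin--Talenti bubbles concentrating at an interior point of $\Omega$ (each carrying $\tfrac{1}{N}S_N^{N/2}$) or, because $s_1>0$, half-space Sobolev--Hardy profiles concentrating at $0$. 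It thus suffices to exhibit a mountain-pass path along which $\varPhi$ stays strictly below $c^*$.

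The heart of the argument is the energy estimate, which I would carry out by testing $\varPhi$ along a truncated Aubin--Talenti family
\begin{equation*}
\phi_\varepsilon(x)\;=\;\eta(x)\,\frac{[N(N-2)\varepsilon^2]^{(N-2)/4}}{(\varepsilon^2+|x-x_\varepsilon|^2)^{(N-2)/2}},
\end{equation*}
where $\eta$ is a fixed cut-off supported in a small ball around $0$ and $x_\varepsilon$ lies on the inward normal to $\partial\Omega$ at $0$ at distance of order $\varepsilon$, so that the rescaled bubble simultaneously sees the boundary geometry and the singular weight $|x|^{-s_1}$. Working in boundary-normal coordinates, in which $\partial\Omega$ is to second order the graph of a quadratic form with trace $-(N-1)H(0)$, a careful expansion yields
\begin{equation*}
\max_{t\ge 0}\varPhi(t\phi_\varepsilon)\;=\;\tfrac{1}{N}S_N^{N/2}\;+\;A\,H(0)\,\varepsilon\;-\;B\,\lambda\,\varepsilon^{2-s_1}\;+\;o(\varepsilon)+o(\varepsilon^{2-s_1})
\end{equation*}
for explicit positive constants $A,B$ depending only on $N$ and $s_1$; the assumptions $H(0)<0$ and $\lambda>0$ make both corrections strictly negative, so the supremum falls below $\tfrac{1}{N}S_N^{N/2}$ for $\varepsilon$ small, and a parallel argument verifies that it is also below the energy of any half-space Sobolev--Hardy bubble. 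The borderline $s_1=2$ must be treated separately, the degenerate power $\varepsilon^{2-s_1}$ being replaced by a Brezis--Nirenberg-type logarithmic correction coming from the Hardy potential $\lambda u/|x|^2$. Once $c_*<c^*$ is established, the mountain-pass theorem delivers a bounded Palais--Smale sequence at level $c_*$ that converges strongly to a critical point $u\ge 0$ of $\varPhi$ with $\varPhi(u)=c_*>0$; the strong maximum principle then upgrades $u$ to a positive solution of \eqref{3equation}.

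The main obstacle is the energy expansion itself. One has to control the Dirichlet term---where the mean curvature enters through the change of variable straightening $\partial\Omega$ near $0$---together with both nonlinear contributions to one order beyond the classical Brezis--Nirenberg analysis, because the Hardy contribution carries its own independent scaling $\varepsilon^{2-s_1}$ that must be tracked separately. The hypothesis $N\ge 4$ enters precisely here: the remainder produced by the cut-off $\eta$ is of order $\varepsilon^{N-2}$, which is smaller than the curvature correction $|H(0)|\varepsilon$ only for $N\ge 4$; in $N=3$ these terms are of the same order and the mountain-pass strategy degenerates. A careful balance between the location of $x_\varepsilon$, the parameter $\varepsilon$ and the exponent $s_1$ is also required so that the geometric and Hardy corrections both appear at leading order rather than cancelling each other.
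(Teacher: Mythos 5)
First, a structural point: the paper does not actually prove Theorem B; it is imported from \cite{HLW}. The comparison must therefore be with the paper's proof of the analogous Theorem~\ref{1theorem} (Sections~3 and~4), which shares the same architecture with a different range of $(s_1,s_2)$. That proof proceeds in two steps: (a) establish the existence of a least-energy entire solution $v$ of the half-space problem \eqref{eqentvy}, with energy $c_1$; (b) take as test function the cut-off, rescaled half-space profile $u_\e = \eta\, \e^{-(N-2)/2} v(\phi(x)/\e)$ and expand $\max_t \varPhi(t u_\e) = c_1 + K_1 H(0)\e(1+o(1)) + O(\e^2)$ with $K_1>0$, so that $H(0)<0$ pushes the mountain-pass level below $c_1$. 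Your outline replaces this with a truncated Aubin--Talenti bubble, and this is where the argument breaks.

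Two concrete problems. (i) \emph{Scales.} You place the bubble of concentration scale $\e$ at distance of order $\e$ from $\partial\Omega$. Then the ratio (distance to boundary)/(scale) is $O(1)$, so the Dirichlet truncation removes an $O(1)$ fraction of the Dirichlet energy and of the $L^{2^*}$ norm. The leading term of $\max_{t\ge 0}\varPhi(t\phi_\e)$ is therefore some number \emph{strictly above} $\tfrac1N S_N^{N/2}$, not $\tfrac1N S_N^{N/2}$, and the claimed expansion $\tfrac1N S_N^{N/2} + A H(0)\e - B\lambda\e^{2-s_1} + o(\e) + o(\e^{2-s_1})$ is not correct. (The powers themselves are also off: for a bubble of scale $\e$ centered at a fixed $x_0\ne 0$, the Hardy contribution scales like $\e^{s_1}$, not $\e^{2-s_1}$, via $\int U_\e^{2^*(s_1)} = \e^{s_1}\int U^{2^*(s_1)}$; and when the center travels to $0$ at rate comparable to $\e$, that correction is $O(1)$.) (ii) \emph{Which threshold must be beaten.} As you observe, the local Palais--Smale threshold is $\min\{\tfrac1N S_N^{N/2},\, c_1\}$, where $c_1$ is the half-space Sobolev--Hardy bubble energy. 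For $\lambda>0$ one has $c_1 < \tfrac1N S_N^{N/2}$ (take Aubin--Talenti bubbles \emph{inside} $\rn$), so the binding constraint is $c_* < c_1$, not $c_* < \tfrac1N S_N^{N/2}$. No Aubin--Talenti profile will give $c_* < c_1$: for any interior bubble, $\max_t\varPhi_{\rn}(t\,\cdot)\ge c_1$ by the very definition of $c_1$, and the curvature correction of size $O(\e)$ cannot overcome a gap that is bounded below. The only profile whose half-space energy is exactly $c_1$ is the half-space minimizer itself, and this is precisely why the paper (and, by the structure of the problem, \cite{HLW} as well) must first solve the half-space problem and then use that solution as the concentration profile. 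This is also where $H(0)<0$ genuinely enters; your ``parallel argument'' for the half-space threshold is not a parallel argument but the entire content of the proof.

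A correct outline along your lines would thus be: prove that the half-space entire solution exists with $c_1 < \tfrac1N S_N^{N/2}$ (here the Brezis--Nirenberg-style interior bubble in $\rn$ and the condition $s_1 \le 2 \le N-2$ are what make $N\ge 4$ necessary, with the $s_1=2$, $N=4$ borderline handled by the logarithmic gain); then cut off and rescale that solution, expand to order $\e$, and use $H(0)<0$ to get $c_*<c_1$; finally run the subcritical approximation and blow-up analysis as in Sections~3--4 to extract a solution.
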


In summary,  equation\,\eqref{1equation} has been studied for either $s_1=2$
or $s_2=0$. The purpose of this paper is to study the remaining cases for equation\,\eqref{1equation}. The following is one of our main theorem.
\begin{thm}\label{1theorem}
Suppose $\Omega$ is a bounded smooth domain in $\Bbb R^N$, $0\in\partial\Omega$ and the mean curvature $H(0)<0$.
Then equation\,\eqref{1equation} has a least-energy solution if
\begin{equation*}
N\geq 3, \,\lambda\in \Bbb R\text{ \,and \,}0<s_2<s_1<2.
\end{equation*}
\end{thm}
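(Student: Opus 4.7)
The strategy is to produce the least-energy solution as a critical point at the mountain-pass level $c_*$ defined in \eqref{4equation}. The mountain-pass geometry is already verified in the excerpt, so the whole task is to handle the lack of the Palais--Smale condition. This is done by locating $c_*$ strictly below the smallest level at which compactness can fail, a strict inequality in which the hypothesis $H(0)<0$ is the decisive ingredient. To identify the correct threshold, I would flatten $\partial\Omega$ near $0$ and carry out the boundary blow-up $u(x)=\varepsilon^{-(N-2)/2}v(x/\varepsilon)$. Because both critical exponents are invariant under this single rescaling, equation\,\eqref{1equation} passes to the limit as the entire half-space problem
\[
\Delta v+\lambda\frac{v^{p_1}}{|x|^{s_1}}+\frac{v^{p_2}}{|x|^{s_2}}=0,\qquad v>0\ \text{in}\ \Bbb R^N_+,\quad v|_{\partial\Bbb R^N_+}=0.
\]
Denote by $m(\lambda)$ the least energy of this limiting problem and by $U$ an extremal; the existence of $U$ is provided by the companion entire-solution analysis of this same paper.

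The first analytic ingredient is a local Palais--Smale lemma: every PS sequence for $\varPhi$ at a level $c\in(0,m(\lambda))$ is relatively compact. This I would obtain through a Struwe-type profile decomposition adapted to Hardy--Sobolev nonlinearities: after extracting a weak limit $u_\infty$ solving \eqref{1equation}, the residual decomposes into bubbles, each of which is a nontrivial entire solution of the limiting problem. Because the critical terms carry the singular weights $|x|^{-s_i}$ with $s_i>0$, no bubble can concentrate at an interior point or at a boundary point other than $0$; the only admissible concentration is at $0$, and a single such bubble already carries energy $\ge m(\lambda)$. Since $c<m(\lambda)$, no bubble is possible and the PS sequence converges strongly.

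The second and most delicate ingredient is the test-function estimate showing $c_*<m(\lambda)$, and it is here that I expect the main obstacle. Set $U_\varepsilon(x)=\varepsilon^{-(N-2)/2}U(x/\varepsilon)$; after straightening $\partial\Omega$ near $0$ by Fermi-type coordinates and introducing a smooth cut-off supported near $0$, one produces $v_\varepsilon\in H^1_0(\Omega)$ and aims at the expansion
\[
\max_{t\ge 0}\varPhi(tv_\varepsilon)\;=\;m(\lambda)\;+\;A\,H(0)\,\varepsilon\;+\;o(\varepsilon)\qquad(\varepsilon\to 0^+),
\]
with an explicit coefficient $A>0$ produced by the $O(\varepsilon)$ Jacobian corrections of the boundary-straightening map. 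Once this expansion is in hand, $H(0)<0$ forces the right-hand side to be $<m(\lambda)$ for all small $\varepsilon$, whence $c_*<m(\lambda)$. The difficulty is that, unlike in the single-exponent Hardy--Sobolev case of \cite{GK,GR,CLW}, the extremal $U$ is not explicit; however, $U$ still enjoys the same Hardy--Sobolev-type decay at $0$ and at infinity (both critical terms obey the same scaling), and the expansion can be produced by splitting the integrals over $\{|x|\le\delta\}$ and $\{|x|\ge\delta\}$ and using those decay bounds together with the second-order expansion of $\partial\Omega$ at the origin.

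Combining the local Palais--Smale lemma with the strict inequality $c_*<m(\lambda)$ yields, by the mountain-pass theorem, a PS sequence at level $c_*$ that converges strongly to a nontrivial critical point $u$ of $\varPhi$. Testing $\varPhi'(u)$ against $u^-$ gives $u\ge 0$, and the strong maximum principle in $\Omega\setminus\{0\}$ then promotes this to $u>0$, producing the desired least-energy solution of \eqref{1equation}.
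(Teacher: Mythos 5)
Your proof is correct in spirit, but it takes a genuinely different route from the paper on the central compactness step. The paper does not work with a Palais--Smale sequence for the critical functional $\varPhi$ at all; instead it perturbs to the subcritical exponents $p_1(\e)=2^*(s_1)-1-\e$ and $p_2(\e)=2^*(s_2)-1-\frac{2-s_2}{2-s_1}\,\e$, for which $\varPhi_\e$ \emph{does} satisfy Palais--Smale, solves the approximate problems to get $u_\e$ with $\varPhi_\e(u_\e)=c^*_\e<c_1$ (the Lemma~\ref{lem41critical} estimate), and then performs a pointwise blow-up of $u_\e$: if the weak limit $u$ vanished, the rescaled maxima would converge in $C^2_{loc}$ to a positive entire solution $v$ of the half-space problem, forcing $\lim c^*_\e\ge\varPhi(v)\ge c_1$ and contradicting the test-function estimate. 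The coefficient $\frac{2-s_2}{2-s_1}$ is chosen precisely so that a single rescaling brings both subcritical terms to the half-space limit simultaneously, which makes the blow-up analysis elementary (uniform $C^2_{loc}$ estimates rather than weak profile extraction). Your route replaces this by a local Palais--Smale lemma established via a Struwe-type profile decomposition for the critical problem directly. Both approaches reach the same threshold $c_1=m(\lambda)$ and use the same test-function construction (your expansion $\max_t\varPhi(tv_\e)=m(\lambda)+A\,H(0)\,\e+o(\e)$ with $A>0$ is exactly Lemma~\ref{lem41critical}, with $A$ built from the constant $K_1=\int_{\mathbb R^{N-1}}|\partial_N U(y',0)|^2|y'|^2\,dy'$). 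What the paper's subcritical route buys is avoidance of the full concentration-compactness machinery: no profile decomposition is needed, only pointwise blow-up and elliptic estimates. What your route buys is a statement that works directly at the critical level and is structurally cleaner, at the cost of having to prove the Struwe decomposition for the double Hardy--Sobolev nonlinearity.

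Two places in your outline are stated but not established and do require work. First, the Struwe decomposition itself: you must justify that the only possible bubble profiles are nontrivial $H^1_0(\rn)$-solutions of the full half-space problem (carrying \emph{both} singular terms), and that each contributes at least $m(\lambda)$; the paper sidesteps this by Corollary~\ref{21corollary}, which proves the infimum of $\varPhi$ over entire solutions is attained and positive, via a Pohozaev-type identity and a Kelvin-transform normalization. Second, the decay of $U$ needed for the $O(\e)$ boundary expansion is not automatic: the paper obtains $|U(y)|\le C|y|^{1-N}$ and $|\nabla U(y)|\le C|y|^{-N}$ for $|y|\ge 1$ via the Kelvin transform $\hat U(y)=|y|^{2-N}U(y/|y|^2)$ together with the local regularity \eqref{decayestimate}, and these bounds are what make the constants $K_1,K_2,K_3$ finite. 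If you supply those two ingredients (profile decomposition, Kelvin-transform decay), the argument you describe goes through and gives an alternative proof of the theorem.
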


\medskip

In principle, the solvability of least energy solutions is closely related to 
 the existence of the entire solutions of equation\,\eqref{1equation}, i.e., $\Omega=\Bbb R^N_+$, the upper half-space.
The existence of entire solutions on the upper half space has been proved by Bartsch, Peng and Zhang\,\cite{BPZ}
when $0<s_2<s_1=2$ and $\lambda<(\frac{N-2}{2})^2$, by Musina \cite{MU} when $N \ge 4$, $s_2=0$, $s_1=2$ and $0<\lambda<(\frac{N-2}{2})^2$,
and by Hsia, Lin and Wadade\,\cite{HLW} when $s_2=0$, $0<s_1<2$ and $\lambda>0$. Close to Theorem~\ref{1theorem}, the following existence of positive entire solutions will be proved in this paper.

\begin{thm}\label{2theorem}
Let $N\geq 3$, $0<s_2<s_1<2$, $\lambda\in\Bbb R$ and $\Omega=\Bbb R^N_+$. Then equation\,\eqref{1equation} has a least-energy solution $u\in H^1_0(\Bbb R^N_+)$.
\end{thm}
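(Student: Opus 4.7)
The approach is a scale-invariant Mountain-Pass argument on $\rn$. The variational setup of the introduction carries over: the Caffarelli-Kohn-Nirenberg embedding $H^1_0(\rn)\hookrightarrow L^{2^*(s_i)}(|x|^{-s_i}\,dx)$ is continuous, so $\varPhi$ still has Mountain-Pass geometry and the Nehari-type characterization $c_*=\inf_{u\ge 0,\,u\not\equiv 0}\max_{t\ge 0}\varPhi(tu)>0$ holds. Ekeland's principle yields a nonnegative Palais-Smale sequence $\{u_n\}\subset H^1_0(\rn)$ at level $c_*$; both $H^1_0$-boundedness of $\{u_n\}$ and a positive lower bound on $\int_{\rn}u_n^{p_2+1}/|x|^{s_2}\,dx$ independent of the sign of $\lambda$ come from the identity
\begin{equation*}
\varPhi(u_n)-\tfrac{1}{p_1+1}\langle\varPhi'(u_n),u_n\rangle=\Bigl(\tfrac12-\tfrac{1}{p_1+1}\Bigr)\int_{\rn}|\nabla u_n|^2\,dx+\Bigl(\tfrac{1}{p_1+1}-\tfrac{1}{p_2+1}\Bigr)\int_{\rn}\frac{u_n^{p_2+1}}{|x|^{s_2}}\,dx,
\end{equation*}
both of whose coefficients are positive because $1<p_1<p_2$.

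The key ingredient is the scaling invariance of \eqref{1equation} on $\rn$: a direct computation using $2^*(s_i)=2(N-s_i)/(N-2)$ shows that the one-parameter action $u\mapsto u_\mu$, $u_\mu(x):=\mu^{(N-2)/2}u(\mu x)$, preserves $H^1_0(\rn)$, the equation, and every individual term of $\varPhi$. I would use this freedom to pin down the concentration scale: choose $\mu_n>0$ so that $\tilde u_n:=(u_n)_{\mu_n}$ satisfies
\[
\int_{\{|x|<1\}\cap\rn}\frac{\tilde u_n^{p_2+1}}{|x|^{s_2}}\,dx=\tfrac12\int_{\rn}\frac{\tilde u_n^{p_2+1}}{|x|^{s_2}}\,dx.
\]
Then $\{\tilde u_n\}$ is still a Palais-Smale sequence at level $c_*$; after passing to a subsequence, $\tilde u_n\rightharpoonup u$ in $H^1_0(\rn)$, and $u$ solves \eqref{1equation} in the distributional sense.

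The crucial step is to show $u\not\equiv 0$. On every annular region $(B_R\setminus \overline{B_\varepsilon})\cap\rn$ the weights $|x|^{-s_i}$ are bounded, and since $s_i>0$ the exponents $2^*(s_i)<2N/(N-2)$ are strictly subcritical, so Rellich yields strong convergence of the weighted $L^{p_i+1}$-integrals there. If $u\equiv 0$, the normalization above together with the lower bound on total weighted mass force the $L^{p_2+1}(|x|^{-s_2})$-mass of $\tilde u_n$ to split into two equal halves, one concentrating at $x=0$ and one escaping to $\infty$. Choosing $\lambda_n\to 0$ (respectively $\Lambda_n\to\infty$) so that the first (respectively second) half lives at unit scale in the rescaled function $\lambda_n^{(N-2)/2}\tilde u_n(\lambda_n y)$, the scale-invariance produces two nontrivial weak limits $V,W\in H^1_0(\rn)$, each a nonzero solution of \eqref{1equation}. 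Since any such solution lies on the Nehari manifold, $\varPhi(V),\varPhi(W)\ge c_*$, and a Brezis-Lieb type energy decomposition yields
\[
c_*=\lim_{n\to\infty}\varPhi(\tilde u_n)\ge \varPhi(V)+\varPhi(W)\ge 2c_*,
\]
contradicting $c_*>0$; hence $u\not\equiv 0$.

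Since $u\ne 0$ is a weak solution it lies on the Nehari manifold, giving $\varPhi(u)\ge c_*$. One final Brezis-Lieb decomposition of $\tilde u_n=u+(\tilde u_n-u)$ combined with the same bubble-extraction argument applied to the residual Palais-Smale sequence $\tilde u_n-u$ at level $c_*-\varPhi(u)\le 0$ forces $\tilde u_n\to u$ strongly in $H^1_0$ and $\varPhi(u)=c_*$; the strong maximum principle then upgrades $u\ge 0$, $u\not\equiv 0$ to $u>0$ in $\rn$. \textbf{The principal obstacle} is making the profile decomposition in the third paragraph rigorous: with two critical Sobolev-Hardy nonlinearities and the Hardy singularity anchored at the boundary point $0\in\partial\rn$, one must ensure that \emph{both} concentration sites (near $0$ and near $\infty$) genuinely produce entire solutions on $\rn$ of energy at least $c_*$. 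The scale-invariance of \eqref{1equation} is what makes this possible, since it guarantees the two distinct scaling limits both live in $H^1_0(\rn)$ and satisfy the same equation.
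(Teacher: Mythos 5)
Your approach is genuinely different from the paper's, and it leaves a real gap precisely at the step you yourself flag as the ``principal obstacle.'' The paper does not work directly on $\rn$: it first solves the \emph{subcritical} approximation $\Delta u + \lambda u^{p_1(\e)}/|x|^{s_1} + u^{p_2(\e)}/|x|^{s_2}=0$ on a bounded convex domain $\Omega$ with $0\in\partial\Omega$, where the Palais--Smale condition holds and the Mountain Pass theorem produces genuine solutions $u_\e$ with $c^*_\e\le C$. A Pohozaev identity then forces the weak limit of $u_\e$ to vanish, so the $u_\e$ blow up; the heart of the proof is the blow-up estimate $|x_\e|=O(k_\e)$ (with $x_\e$ the maximum point, $m_\e=u_\e(x_\e)$, $k_\e=m_\e^{-2/(N-2)+\e/(2-s_1)}$), proved by contradiction: if the singularity were asymptotically far from the concentration point, the rescaled limit would solve $\Delta v + v^{p_2}=0$ on a half-space or $\mathbb R^N$ with the subcritical exponent $p_2<\tfrac{N+2}{N-2}$, forcing $v\equiv 0$ and contradicting $v(0)=1$. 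This pins the limit as a nontrivial solution on an affine half-space with the singularity on its boundary, which a linear change of variables carries to $\rn$.

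Your route is a dilation-normalized concentration-compactness argument posed directly on $\rn$, and its third paragraph has two concrete gaps. First, your dichotomy is not exhaustive: you must rule out \emph{vanishing}, i.e.\ $\sup_{r>0}\int_{(B_{2r}\setminus B_r)\cap\rn}|\nabla\tilde u_n|^2\to 0$, in which no dilation captures a fixed fraction of the weighted $L^{p_2+1}$ mass. This requires a dilation-invariant Lions-type lemma for the norms $\int|u|^{2^*(s_i)}/|x|^{s_i}$, plausible via a dyadic decomposition exploiting scale-covariance, but not automatic. Second, the extraction of \emph{nontrivial} profiles $V,W$ is circular as written: rescaling the inner half by $\lambda_n\to 0$ yields another Palais--Smale sequence at level $c_*$ with the same half-mass normalization and the same a priori possibility of a zero weak limit. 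You would need an iterated profile decomposition that terminates because each bubble costs at least $c_*>0$, or---closer to what the paper actually does---a pointwise blow-up normalization (rescale about the maximum point, with the $\e$-dependent scale $k_\e$) in place of the integral half-mass normalization. The paper's subcritical approximation sidesteps both difficulties: compactness is free for $\e>0$, so one first obtains a genuine solution and only then sends $\e\to 0$, with all the analytic weight carried by the blow-up estimate $|x_\e|=O(k_\e)$.
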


\medskip

To complement Theorem\,\ref{2theorem}, we prove the following non-existence of entire solutions of \eqref{1equation} when $s_2=0$ and $\lambda\leq 0$.
\begin{thm}\label{3theorem}
Let $\Omega=\Bbb R^N_+$, $0<s_1\leq 2$ and $\lambda\leq 0$. 
Suppose $u(x) \in H^1_{loc}(\overline{\rn})$ and $u(x) \ge 0$ is a solution
of \eqref{3equation}. Then $u(x) \equiv 0$. 
\end{thm}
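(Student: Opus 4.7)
The strategy is a Pohozaev-type identity with the multiplier $u_{x_N}:=\partial u/\partial x_N$, exploiting that $x_N$ is the direction normal to the flat boundary. On $\partial' D_R := D_R\cap\partial\rn$ with $D_R := \rn\cap B_R$, the Dirichlet condition $u=0$ forces the tangential gradient to vanish, so $\nabla u = u_{x_N}e_N$ there; in particular, only the Laplacian term contributes on $\partial'D_R$, yielding $-\tfrac12\int_{\partial'D_R} u_{x_N}^2\,dS$ (the nonlinear terms drop out since $u=0$). The Hardy weight is not translation-invariant in $x_N$: since $\partial_N|x|^{-s_1}=-s_1\,x_N/|x|^{s_1+2}$, integration by parts of the Hardy term generates an additional interior contribution $\tfrac{s_1}{p_1+1}\int_{D_R}u^{p_1+1}x_N/|x|^{s_1+2}\,dx$, where $p_i:=2^*(s_i)-1$. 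After collecting the remaining surface terms on the outer hemisphere $\partial''D_R:=\rn\cap\partial B_R$ into one quantity $\mathcal I(R)$, the identity takes the form
\begin{equation*}
\tfrac12\int_{\partial'D_R} u_{x_N}^2\,dS \;-\; \tfrac{\lambda s_1}{p_1+1}\int_{D_R}\frac{u^{p_1+1}x_N}{|x|^{s_1+2}}\,dx \;=\; \mathcal I(R),
\end{equation*}
with the schematic bound
\begin{equation*}
|\mathcal I(R)| \;\le\; C\int_{\partial''D_R}\Bigl(|\nabla u|^2 + \frac{u^{p_1+1}}{|x|^{s_1}} + u^{p_2+1}\Bigr)dS.
\end{equation*}

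Next, I would establish global finiteness of $\int_{\rn}|\nabla u|^2$, $\int_{\rn}u^{p_1+1}/|x|^{s_1}$, $\int_{\rn}u^{p_2+1}$. Testing the equation against $u\phi_R^2$ for a standard cutoff $\phi_R$, using $\lambda\le 0$ to drop the Hardy term from the right-hand side, absorbing the gradient cross-term via Cauchy--Schwarz, and controlling $\int\phi_R^2 u^{p_2+1}$ by the Sobolev embedding, gives $R$-uniform bounds; passing to $R\to\infty$ yields the global integrability. Then by the coarea formula, the map $R\mapsto\int_{\partial''D_R}(\cdots)\,dS$ lies in $L^1(0,\infty)$, so one can extract a sequence $R_j\to\infty$ with $\mathcal I(R_j)\to 0$. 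Passing to the limit in the identity above produces
\begin{equation*}
\tfrac12\int_{\partial\rn} u_{x_N}^2\,dS \;=\; \tfrac{\lambda s_1}{p_1+1}\int_{\rn}\frac{u^{p_1+1}x_N}{|x|^{s_1+2}}\,dx.
\end{equation*}

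The conclusion follows by sign analysis. The LHS is $\ge 0$; since $\lambda\le 0$, $s_1>0$, $u\ge 0$, and $x_N>0$ on $\rn$, the RHS is $\le 0$. Both sides must therefore vanish. For $\lambda<0$, the integrand $u^{p_1+1}x_N/|x|^{s_1+2}$ is non-negative pointwise and vanishes only where $u=0$, hence $u\equiv 0$ on $\rn$. For $\lambda=0$, only the LHS carries information and gives $u_{x_N}\equiv 0$ on $\partial\rn$; together with $u\equiv 0$ on $\partial\rn$, an even reflection across $\{x_N=0\}$ extends $u$ to a non-negative $C^1$ solution of $\Delta\tilde u+\tilde u^{(N+2)/(N-2)}=0$ on a full neighborhood of the hyperplane, whence the strong maximum principle (applied to the superharmonic function $\tilde u$) together with unique continuation forces $u\equiv 0$ in $\rn$.

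The main obstacle is the global integrability step. The problem is doubly critical and the weight $|x|^{-s_1}$ is singular precisely at the boundary point $0\in\partial\rn$, so the cutoff testing argument must handle simultaneously the behaviour at infinity and at the origin; a two-parameter cutoff $(\chi_\varepsilon,\phi_R)$ removing $B_\varepsilon(0)$ and the exterior of $B_R$ from the test function, combined with the Sobolev-Hardy inequality to absorb the critical term $\int\phi_R^2 u^{p_2+1}$ against $\int\phi_R^2|\nabla u|^2$ up to lower-order pieces, will likely be needed. Once those bounds are secured, the algebraic steps above carry the proof through.
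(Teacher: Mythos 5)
Your Pohozaev identity with the multiplier $u_{x_N}$ is algebraically correct (the flat boundary term $-\tfrac12\int_{\partial'D_R}u_{x_N}^2$, the Hardy commutator term, and the vanishing of the nonlinear contributions on $\partial'D_R$ all check out), and the sign analysis at the end is clean. This is a genuinely different route from the paper, which instead proves the monotonicity $\partial u/\partial x_N>0$ by the method of moving spheres (Lemma~2.1) and then runs a blow-up/translation argument against the Caffarelli--Gidas--Spruck classification.

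However, the global integrability step is a genuine gap, not merely a technicality, and your proposed fix does not work. Testing the equation against $u\phi_R^2$ produces, after dropping the favorable Hardy term and absorbing the cross term,
\begin{equation*}
\int \phi_R^2|\nabla u|^2 \;\lesssim\; \int \phi_R^2 u^{p_2+1} \;+\; \int |\nabla\phi_R|^2 u^2 ,
\end{equation*}
with $p_2+1=\tfrac{2N}{N-2}$. Applying Sobolev to $\phi_R u$ gives only the \emph{superlinear} bound $\int\phi_R^2 u^{p_2+1}\lesssim\bigl(\int|\nabla(\phi_R u)|^2\bigr)^{N/(N-2)}$, so you end up with $X\lesssim X^{N/(N-2)}+\text{l.o.t.}$ with $X:=\int\phi_R^2|\nabla u|^2$, which gives no control on $X$ unless you already know it is small. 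There is nothing forcing smallness: the hypothesis is only $u\in H^1_{loc}(\overline{\rn})$, and the theorem is explicitly stated without any decay assumption at infinity (this is emphasized in the paper --- ``we will not require any assumption on the behavior of solutions at $\infty$''). Indeed, \emph{a priori} $u$ need not even be bounded; the paper's proof establishes boundedness as an intermediate step via a blow-up argument that itself relies on the monotonicity from moving spheres. Without global finiteness of $\int|\nabla u|^2$ and $\int u^{p_2+1}$, the coarea/subsequence argument for $\mathcal I(R_j)\to 0$ cannot even start, and the identity cannot be passed to the limit. So the argument as written does not prove the theorem in the stated generality; it would only work under a strong extra hypothesis like $u\in H^1_0(\rn)$ (in which case the result for $s_1=2$ was already known, as the paper notes). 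A secondary but real concern: for $s_1=2$ the regularity estimate~\eqref{decayestimate} does not apply, so even the trace term $\int_{\partial'D_R}u_{x_N}^2\,dS$ near $0$ needs separate justification.
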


We note that if solutions are assumed to be in $H^1_0(\rn)$, 
Theorem~\ref{3theorem} with $s_1=2$ has been proved in \cite{HLW}. 
The authors of \cite{HLW} employed the method of moving planes to prove
Theorem~\ref{3theorem}, where the behavior of $u$ at $\infty$ is needed. One 
way to find asymptotic behavior is  to apply 
the Kelvein transform to $u$:
$$
\hat{u}(y) = \Big( \frac{1}{|y|} \Big)^{N-2} u \Big( \frac{y}{|y|^2} \Big)   \qquad {for } \qquad |y|<1.
$$
 It is a straightforward computation to show that $\hat{u}(y)$ satisfies
 $$
 \Delta \hat{u} + \lambda \frac{\hat{u}^{2^*(s_1) -1} (y) }{|y|^{s_1}} + \hat{u}^{\frac{N+2}{N-2}}(y) =0  \qquad \text{in }  \qquad  B_1 \cap \rn.
 $$
 But $\hat{u}$ is no longer contained in $H^1_{loc}(\overline{\rn})$, i.e., the integration of  $\nabla\hat{u}$ might be $+\infty$ in any neighborhood of $0$. In this case, the origin $0$ is called a nonremovable singularity of $\hat{u}$. It is a really interesting question : What is the asymptotic behavior of $\hat{u}$ near the singularity? Previously, this kind of problems have been studied:
 $$
 \Delta u + g(y, u)+ u^{\frac{N+2}{N-2}}=0  \qquad \text{ in  }  0<|y| <1.
 $$
  Under the monotonicity assumption of $u$:
  $$
  g(y, t) t^{-\frac{N+2}{N-2}}\,\, \text{ is decreasing for large } t >0,
  $$
 it was proved that $u(y) = O (|y|^{-\frac{N-2}{2}})$ near $0$. See \cite{CL0, CL,CL2, CL3}. For our case, 
 $$g(y, u)= \lambda \frac{u^{2^*(s_1) - 1 }}{|y|^{s_1}}
\qquad  \text{and} \qquad \lambda < 0.$$
 Then  $g(y, t) t^{-\frac{N+2}{N-2}}$ is increasing in $t>0$. Hence,
  the methods in \cite{CL0, CL,CL2, CL3} can not work for our nonlinearity.
  We should address this asymptotic problem later.
  
  Our proof of Theorem~\ref{3theorem}  employs the idea of the method 
  of moving spheres, a variant of the method of moving planes. 
The method of moving planes  has been developed through the works by 
A.D. Alexandrov, Serrin \cite{S}
, and Gidas, Ni and Nirenberg \cite{GNN}. 
Here, we will not require any assumption on the behavior of solutions at $\infty$, by taking
  some advantage of the upper half space $\rn$, while compared to $\mathbb R^N$.  We think this proof might be useful in other problems also.
See \cite{LZhu, LZhang, JLX} for some related
results. 
 
This paper is organized as follows. In Section 2,
we will prove
Theorem 1.3 and a generalization of it.
In Section 3, we will employ a blowing-up argument to prove Theorem~\ref{2theorem}. This kind of arguments have been developed for studying the nonlinear equation involving the Sobolev critical exponent, see \cite{CL0,CL,CL2,CL3,HL}.   The existence of least-energy solutions of equation \eqref{1equation} with $0<s_2 <s_1<2$ are obtained in Section 4. In final section, we discuss a perturbed equation of equation \eqref{1equation} for the case $\lambda < 0$, $0=s_2 < s_1<2$.

\section{Nonexistence of Entire Solutions}
\hspace{.6cm}
In this section, we begin with a proof of Theorem~\ref{3theorem}. 
We first make a remark about regularity of $u(x)$. It is shown that $u \in C^{\alpha}(\overline{\rn})$, for any $\alpha \in (0,1)$. For a proof, see \cite{CLW} and \cite{HLW}.
 
If $u=0$ at some point of $\Bbb R^N_+$, then $u\equiv 0$ by the strong maximum principle.
Hence, we will always assume that
\begin{equation}
u(x)>0.
\end{equation}
We will prove a lemma below.

\begin{lem}\label{lem31}
Let $u(x)$ be a positive solution of equation\,\eqref{3equation}. Suppose $u\in H^1_{loc}(\overline{\Bbb R^N_+})$. Then $\frac{\partial u}{\partial x_N}>0$ in $\Bbb R^N_+$.
\end{lem}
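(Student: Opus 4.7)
I propose to prove the monotonicity by the method of moving planes perpendicular to $\partial\rn$. For each $\mu>0$ set $x^\mu:=(x',2\mu-x_N)$, $u_\mu(x):=u(x^\mu)$, $\Sigma_\mu:=\{x\in\rn:0<x_N<\mu\}$, and $w_\mu:=u_\mu-u$ on $\Sigma_\mu$. The whole argument is aimed at showing $w_\mu\ge 0$ on $\Sigma_\mu$ for every $\mu>0$. Once this is established, Hopf's boundary lemma applied to $w_\mu$ on the interior plane $\{x_N=\mu\}$, on which $w_\mu\equiv 0$ by construction, produces $\partial_N w_\mu<0$ there, equivalently $\partial_N u(x',\mu)>0$, and since $\mu>0$ is arbitrary this is exactly the lemma.

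\paragraph{Setting up the comparison.}
Subtracting the equations satisfied by $u_\mu$ and $u$ and applying the mean value theorem to both power nonlinearities produces an identity of the shape
\[
\Delta w_\mu + c_\mu(x)\,w_\mu \;=\; -\lambda\, u(x)^{2^*(s_1)-1}\!\left(\frac{1}{|x^\mu|^{s_1}}-\frac{1}{|x|^{s_1}}\right),
\]
where the coefficient $c_\mu$ has its positive part locally bounded via $u\in C^\alpha(\overline{\rn})$. The decisive sign observation is $|x^\mu|^2-|x|^2 = 4\mu(\mu-x_N)>0$ on $\Sigma_\mu$, so the parenthesized factor is negative; multiplied by $-\lambda\ge 0$ and the nonnegative factor $u^{2^*(s_1)-1}$, the right-hand side is nonpositive, i.e.\ $(\Delta+c_\mu)w_\mu\le 0$ in $\Sigma_\mu$. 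The boundary data are favourable: $w_\mu\equiv 0$ on $\{x_N=\mu\}$ by construction, while on $\{x_N=0\}$ one has $w_\mu(x',0)=u(x',2\mu)>0$ since $u$ vanishes on $\partial\rn$ but is strictly positive in the interior.

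\paragraph{Running the planes.}
For $\mu>0$ sufficiently small, $\Sigma_\mu$ is a thin slab and a Berestycki-Nirenberg narrow-domain maximum principle for $\Delta+c_\mu$ delivers $w_\mu\ge 0$ on $\Sigma_\mu$. Let $T:=\sup\{\mu>0:w_\nu\ge 0\text{ in }\Sigma_\nu\text{ for every }0<\nu\le\mu\}$. Assuming $T<\infty$, continuity gives $w_T\ge 0$ in $\Sigma_T$; the strong maximum principle applied to the inequality above (using strict positivity of $w_T$ on $\{x_N=0\}$) then upgrades this to $w_T>0$ in the open slab, after which a standard compact-core plus thin-sliver decomposition propagates $w_\mu\ge 0$ slightly past $T$, contradicting the maximality of $T$. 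Hence $T=\infty$ and the Hopf step described above closes the argument.

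\paragraph{The main obstacle.}
The principal difficulty is that $\Sigma_\mu$ is unbounded in the tangential $x'$ directions while no decay of $u$ at infinity is postulated, so a global maximum principle is not directly available. I would bypass this by using a narrow-slab maximum principle that requires only the slab thickness $\mu$ to be small together with an $L^\infty$ bound on $c_\mu^+$; the sign $\lambda\le 0$ is crucial here because it forces the Hardy-Sobolev part of $c_\mu$ to be nonpositive, so only the contribution from the $u^{(N+2)/(N-2)}$ term enters the positive part. If this still fails to yield a uniform bound in $x'$, one can localize via a tangential cut-off, since the desired conclusion $\partial_N u(x_0)>0$ at a given $x_0\in\rn$ is a local statement. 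The secondary nuisance---the singular weight $|x|^{-s_1}$ near the origin---is absorbed by the Hölder vanishing $u(x)=O(|x|^\alpha)$ coming from $u\in C^\alpha(\overline{\rn})$ and $u(0)=0$, so the coefficients and right-hand side in the differential inequality remain integrable enough to justify the comparison.
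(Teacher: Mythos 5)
Your proposal reproduces, nearly step by step, the argument the paper uses for Theorem~\ref{sec3thm} (the moving-plane proof with reflection $x\mapsto(x',2\mu-x_N)$, slab $\Sigma_\mu$, narrow-domain maximum principle, strong maximum principle plus sliding). However, you have identified the decisive obstacle yourself and then underestimated it: \emph{the moving-plane scheme cannot run in the unbounded slab $\Sigma_\mu$ under the hypotheses of Lemma~\ref{lem31}.} Two things go wrong. First, the zero-order coefficient $c_\mu$ contains the factor $\tfrac{N+2}{N-2}\max(u,u_\mu)^{4/(N-2)}$ coming from the Sobolev term, and this is not bounded on $\Sigma_\mu$ because the lemma only assumes $u\in H^1_{loc}(\overline{\rn})$, not $u\in L^\infty$. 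Second, even granting a bound on $c_\mu^+$, the narrow-slab maximum principle in the infinite strip requires $\liminf_{|x'|\to\infty}w_\mu\ge 0$ (or the equivalent after dividing by a positive supersolution), which is unavailable without decay. Your fallback --- truncating in the tangential direction because the conclusion at a single point $x_0$ is ``local'' --- does not repair this: to run moving planes up to $\mu=(x_0)_N$ you must show $w_\mu\ge 0$ on all of $\Sigma_\mu$, and once you pass to a bounded piece $\{|x'|<R\}\cap\Sigma_\mu$ you lose the sign of $w_\mu$ on the new lateral boundary $\{|x'|=R\}$, so the comparison principle cannot even be started there.

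The paper sidesteps this entirely by using the method of \emph{moving spheres} rather than moving planes: it reflects across spheres $\partial B_\lambda(x_R)$ with center $x_R=(0,\dots,0,-R)$ outside $\overline{\rn}$, so the comparison region $B_\lambda(x_R)\cap\rn$ is \emph{bounded} for every $\lambda$. No behavior at infinity is needed; the integral (energy) form of the maximum principle, $\int|\nabla w_\lambda^-|^2\le C|B_\lambda(x_R)\cap\rn|^{2/N}\int|\nabla w_\lambda^-|^2$, only uses $u\in H^1_{loc}$. The key analytic input is a pointwise inequality for the Kelvin-transformed weight, $\bigl(\tfrac{\lambda}{|y-x_R|}\bigr)^{2s}\,|x_R+\tfrac{\lambda^2(y-x_R)}{|y-x_R|^2}|^{-s}\le |y|^{-s}$, which plays the role your sign computation $|x^\mu|>|x|$ plays in the planar picture. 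Monotonicity in $x_N$ then drops out by sending $R\to\infty$ along spheres through a fixed plane: $u_{x_R,R+a}(y)\to u(y',2a-y_N)$. The paper \emph{does} present a moving-plane proof like yours, but only as Theorem~\ref{sec3thm}, under the explicit extra hypothesis $u\in L^\infty(\rn)$, and Step~1 of that proof derives the needed decay $u(x)\to 0$ as $|x|\to\infty$ from the boundedness. So as a proof of Lemma~\ref{lem31} as stated, your proposal has a genuine gap; to close it along your lines you would need to either add the $L^\infty$ hypothesis (changing the statement) or switch to the bounded-region moving-spheres variant.
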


Before giving a proof of Lemma\,\ref{lem31}, we apply Lemma\,\ref{lem31} to prove Theorem\,\ref{3theorem}.

\medskip

\noindent
{\bf Proof of Theorem\,\ref{3theorem}.}
Suppose $u(x)$ is a positive solution of equation\,\eqref{3equation}.

We claim $u$ is uniformly bounded in any compact set of $\mathbb R^N$. 
Suppose the contrary,  then there exist
 $ \bar x_i \in \mathbb R^N$, such that
$$
u(\bar x_i)\to \infty\quad \mbox{as}\ i\to\infty.
$$
By the monotonicity of $u$ in $x_N-$direction, we may assume that
$$
|\bar x_i|\to\infty.
$$
Consider
$$
v_i(x)=(1-|x-\bar x_i|)^{ \frac {N-2}2} u(x), \qquad |x-\bar
x_i|\le 1.
$$
For some $|x_i-\bar x_i|<1$,
$$
v_i(x_i)=\max_{ |x-\bar x_i|\le 1}v_i(x),
$$
here we have used the fact that $v_i(x)=0$ for $|x-\bar
x_i|=1$.

Let
$$
\sigma_i=\frac 12(1-|x_i-\bar x_i|)>0.
$$
Then
$$
(2\sigma_i)^{ \frac {N-2}2} u(x_i)=  v_i(x_i)\ge v_i(\bar
x_i)=u(\bar x_i)\to \infty.
$$
It follows that
$$
R_i:= \sigma_i u(x_i)^{ \frac 2{N-2} }\to \infty.
$$

Since
$$
v_i(x_i)\ge v_i(x)\ge \sigma_i^{ \frac {N-2}2}u(x), \qquad
\forall\ x\in B_{ \sigma_i}(x_i),
$$
we see that
$$
u(x)\le 2^{ \frac{N-2}2}u(x_i),\qquad \forall\ x\in B_{
\sigma_i}(x_i).
$$

Consider
$$
w_i(y):=\frac 1{  u(x_i)  } u\left(x_i+\frac y{  u(x_i)^{
\frac 2{N-2} }   }\right),\qquad |y|<R_i = \sigma_i u_i(x_i)^{ \frac
2{N-2} }\to\infty.
$$
Then
$$
w_i(0)=1,\quad \mbox{and} \quad  w_i(y)\le 2^{ \frac {N-2}2},\ \
\forall\ |y|<R_i.
$$
Using the equation satisfied by $u_i$, we have
$$
\Delta w_i(y)-  \frac 1{  u(x_i)^{  \frac{2s_1}{N-2} }
} \frac {    w_i(y)^{  2^*(s_1)-1}  } {  |x_i+  \frac {y} {
u(x_i) ^{  \frac 2{N-2}}   }|^{s_1} } +w_i(y)^{ \frac {N+2}{N-2}
}=0,\qquad |y|<R_i.
$$

Since $|\bar x_i|\to\infty$, it is clear that
$$
 |x_i+  \frac {y} {
u(x_i) ^{  \frac 2{N-2}}   }|\ge |\bar x_i|-|x_i-\bar x_i| -
\sigma_i\ge |\bar x_i|-2\to\infty,\quad \mbox{uniformly for}\
|y|<R_i.
$$

Given the bound of $w_i$, we know from standard elliptic estimates
that on every compact subset of $\mathbb R^N$, $\{w_i\}$ is bounded in $C^3$
norm.   After passing to a subsequence, we have,
$$
w_i\to w\qquad \mbox{in}\ C^2_{loc}( \overline{\rn}).
$$
Given the above estimates, and the equation of $w_i$, we have
$$
\Delta w+w^{ \frac {N+2}{N-2} }=0,\qquad \mbox{on}\ \mathbb R^N,
$$
and
$$
w(0)=1, \quad w\ge 0\qquad \mbox{on}\ \mathbb R^N.
$$
By the strong maximum principle, $w>0$ on $R^N$.

By the classification theorem of Caffarelli-Gidas-Spruck,
\begin{equation}
\label{33equation}
w(y)=C_N\left(  \frac {\mu}{  1+\mu^2|y-y_0|^2  }\right)^{ \frac
{N-2}2 },
\end{equation}
where $\mu>0$ and $y_0\in \mathbb R^N$.

But we know from the monotonicity of $w_i$, $w$ must be monotone in
$y_N$-direction.  This is a contradiction and the claim is proved.

Let $u_j(x',x_N)=u(x', x_N+r_j)$ where $r_j<r_{j+1}\to+\infty$ as $j\to+\infty$.
By Lemma\,\ref{lem31}, $u_j(x)<u_{j+1}(x)$.  
 Since $u(x)$ is uniformly bounded, $u_j(x)\to u_\infty(x)$ in $C^2_{loc}(\Bbb R^N)$,
where $u_\infty(x)$ is a positive solution to
\begin{equation*}
\Delta u_\infty+u_\infty^{\frac{N+2}{N-2}}=0\quad\text{in \,}\Bbb R^N.
\end{equation*}
Again,  Lemma\,\ref{lem31} yields a 
 contradiction to \eqref{33equation}. Hence, the proof of Theorem~\ref{3theorem} is complete.
\hfill{$\square$}

{\bf Proof of Lemma~\ref{lem31}.}
 The 
proof uses  the method of moving spheres, a variant of
the method of moving planes which are developed through the works of
Alexandrov, Serrin \cite{S}, and Gidas, Ni and Nirenberg
\cite{GNN}.  We also make use of the ``narrow domain idea'' from
Berestycki and Nirenberg \cite{BN1}.

Define $$
x_R:= (0, \cdots, 0, -R).
$$
Let
$$
u_{x_{R}, \lambda}(y):= \left( \frac {\lambda}{  |y-x_R|  }\right)^{ N-2}
u\left(x_R+\frac {\lambda^2(y-x_R) }{ |y-x_R|^2}\right)
$$
be the Kelvin transformation of $u$ with respect to the ball
$B_\lambda(x_R)$ with center $x_R$ and radius $\lambda>0$. By 
direct computations, we have for $y \in B_{\lambda}(x_R)\cap \rn$,
$$
\Delta u_{x_R, \lambda}(y) - \Big(\frac{\lambda}{|y-x_R|}\Big)^{2s}\, \frac{u^{2^*(s)-1}_{x_R, \lambda}(y)}{|x_R + \frac{\lambda^2(y - x_R)}{y- x_R}|^s} + u_{x_R, \lambda}^{\frac{N+2}{N-2}}=0.
$$
We want to show that 
\begin{equation}
\label{2.5}
u_{x_R,\lambda} (y) \ge u(y)  \qquad \forall y \in B_{\lambda}(x_R)\cap \rn, \,\, \forall \lambda > R.
\end{equation}
To prove \eqref{2.5}, we first claim 
\begin{equation}
\label{2.6}
\left(\frac{\lambda}{|y-x_R|} \right)^{2s} \frac{1}{|x_R + \frac{\lambda^2 (y- x_R)}{|y - x_R|^2}|^s} \le \frac{1}{|y|^s},
\end{equation}
for $y \in B_{\lambda}(x_R) \cap \rn$, $\forall \lambda > R$.

For $y\in B_\lambda(x_R)\cap \rn$, we write
$$
\frac{ y-x_R}{ |y-x_R|}=\theta=(\theta_1, \cdots, \theta_N),\qquad
|y-x_R|=r.
$$
Then
\begin{equation}
\mu_1(\theta)<r<\lambda, \label{9-5} \end{equation} where
$\mu_1(\theta)$ is determined by
$$
x_R+\mu_1(\theta)\theta\in \partial \partial \rn.
$$
Namely,
$$
\mu_1(\theta)= \frac R{ \theta_N}.
$$

\bigskip

\eqref{2.6} is equivalent to
$$
\left(\frac \lambda r\right)^{2s} \frac 1{  |x_R+\frac{\lambda^2}r
\theta|^s}\le \frac 1{ |x_R+r\theta|^s}.
$$
This is equivalent to
\begin{equation}
\left( \frac{\lambda^2}r\right)^2
 \frac 1{  |x_R+\frac{\lambda^2}r
\theta|^2}\le r^2\frac 1{ |x_R+r\theta|^2}. \label{9-4}
\end{equation}
For $r$ satisfying (\ref{9-5}), we have
$$
\frac {\lambda^2}R>\frac {\lambda^2 }r>r>\mu_1(\theta).
$$
Let
$$
\eta(\mu):= \mu^2 \frac 1{  |x_R+\mu \theta|^2},\qquad  \mu >\mu_1(\theta).
$$
In order to prove (\ref{9-4}), we only need to prove
\begin{equation}
\eta'(\mu)\le 0,\qquad \mu_1(\theta)< \mu <\frac {\lambda^2}R. \label{11-1}
\end{equation}
This follows from the following calculations, for $\mu > \mu_1(\theta)$,

\begin{eqnarray*}
|x_R+ \mu \theta|^{4} \eta'(\mu)&=& 2 \mu |x_R+ \mu \theta|^2 - \mu^2 \frac d{d \mu}
(|x_R+ \mu \theta|^2)\\
&=& 2 \mu R \theta_N ( \mu_1 - \mu)<0.
\end{eqnarray*}
We have proved (\ref{11-1}), and therefore proved \eqref{2.6}. 
It follows that
$$
-\Delta u_{x_R, \lambda}+\frac 1{ |y|^s }u^{2(s)-1}_{x_R, \lambda}\ge u_{x_R,
\lambda}(y)^{ \frac {N+2}{N-2} },\qquad\mbox{in} \
B_\lambda(x_R)\cap
R^n_+.
$$
Thus
\begin{equation}
-\Delta \left(u_{x_R, \lambda}-u\right) +\frac 1{ |y|^s }
\left(u^{2^*(s)-1}_{x_R, \lambda}-u^{2^*(s)-1}\right)\ge
 u_{x_R,
\lambda}^{ \frac {N+2}{N-2} }-u^{ \frac {N+2}{N-2} },
\qquad\mbox{in} \ B_\lambda(x_R)\cap \rn. \label{9} \end{equation}
Write
\begin{equation}
w_\lambda=u_{x_R, \lambda}-u, \quad w_\lambda^-=\max\{0,
-w_\lambda\}. \label{notation} \end{equation}
 We first require that
$R<\lambda_0(R)<2R$, then for $R<\lambda<\lambda_0(R)$, we have
$$
|x_R+\frac {\lambda^2(y-x_R) }{ |y-x_R|^2}|\le |x_R|+ \frac
{\lambda^2}R\le 5R,\qquad \forall\ y\in B_\lambda(x_R)\cap \rn.
$$
 Multiply $w_\lambda^-$ to
the  inequality (\ref{9})  and integrate by parts on $B_\lambda(x_R)\cap
\rn$, we have, using $w_\lambda\ge 0$ on
$\partial(B_\lambda(x_R)\cap \rn)$,
\begin{eqnarray*}
&&\int_{B_\lambda(x_R)\cap \rn} |\nabla w_\lambda^-|^2dy  \\
&\le&\int_{B_\lambda(x_R)\cap \rn}(|\nabla w_\lambda^-|^2 - \frac 1{
|y|^s } (u^{2^*(s)-1}_{x_R, \lambda}-u^{2^*(s)-1}) w_\lambda^- dy \\
&\le &\int_{B_\lambda(x_R)\cap \rn} \left( u_{x_R, \lambda}^{
\frac {N+2}{N-2} }-u^{ \frac {N+2}{N-2} }\right) w_\lambda^-  dy \\
&\le & \frac {N+2}{N-2} \int_{B_\lambda(x_R)\cap
\rn}\left(\max\{u_{x_R, \lambda}, u\}\right)^{ \frac 4{N-2} }
(w_\lambda^-)^2 dy \\
&\le &\frac {N+2}{N-2} \sup_{  B_{5R}(0)\cap \rn } u^{ \frac
4{N-2} }\int_{B_\lambda(x_R)\cap \rn}(w_\lambda^-)^2 dy \\
&\le& C(N) |B_\lambda(x_R)\cap \rn|^{\frac 2N} \|w_\lambda^-\|_{
L^{ \frac{2N}{N-2}  }(B_\lambda(x_R)\cap \rn) }^2 \\
&\le & C(N)|B_\lambda(x_R)\cap \rn|^{\frac 2N}
\int_{B_\lambda(x_R)\cap \rn}|\nabla w_\lambda^-|^2 dy.
\end{eqnarray*}
Now we can choose $\lambda_0(R)>R$ but very close to $R$, then
$|B_\lambda(x_R)\cap \rn|$ is small, and we have
$$\int_{B_\lambda(x_R)\cap \rn}|\nabla w_\lambda^-|^2  dy \le \frac 12 \int_{B_\lambda(x_R)\cap
\rn}|\nabla w_\lambda^-|^2 dy.
$$
This implies $\nabla w_\lambda^-=0$ in $B_\lambda(x_R)\cap \rn$
and therefore, since $w_\lambda^-=0$ on $\partial B_\lambda(x_R)\cap
\rn$, $w_\lambda^-=0$ in $B_\lambda(x_R)\cap \rn$. Step 1 is
established.

\bigskip

Define
$$
\bar \lambda(R):= \sup\{\mu\ |\ \mu>R, \ \mbox{and}\ u_{x_R,
\lambda}(y)\ge u(y),\ \forall\ y\in B_\lambda(x_R)\cap \rn,\
\forall\ R<\lambda<\mu\}.
$$
By Step 1, $\bar \lambda(R)$ is well defined and $R<\bar
\lambda(R)\le \infty$.

\bigskip

{\bf Step 2.}\ $\overline{\lambda}(R)=\infty$ for all $R>0$.

\bigskip

We establish Step 2 by contradiction.  Suppose that $\bar
\lambda\equiv \bar\lambda(R)<\infty$ for some $R>0$.  Then
$$
 u_{x_R,
\bar \lambda}(y)\ge u(y),\ \forall\ y\in B_{\bar \lambda}(x_R)\cap
\rn.
$$
Since $u_{x_R, \bar \lambda}>u$ on $B_{\bar \lambda}(x_R)\cap
\partial \rn,$ we have, by the strong maximum principle,
$$
 u_{x_R,
\bar \lambda}(y)> u(y),\ \forall\ y\in B_{\bar \lambda}(x_R)\cap
\rn.
$$
For $\delta>0$ small, and the value to be fixed below, let
$$
K:=\{y\in B_{\bar \lambda}(x_R)\cap \rn\ |\ dist(y, \partial
(B_{\bar \lambda}(x_R)\cap \rn))\ge \delta\}.
$$
Then
$$
b:=\min_{  K } w_{\bar \lambda}>0,
$$
where we have used the notation (\ref{notation}).

Consider $\bar \lambda<\lambda<\bar\lambda+\epsilon$, where the
value of $\epsilon=\epsilon(\delta)<\delta$ is chosen so that
\begin{equation}
w_\lambda>\frac b2,\qquad\mbox{on}\ K,\qquad\forall\ \bar
\lambda<\lambda<\bar\lambda+\epsilon. \label{99} \end{equation}

Multiplying (\ref{9}) by $w_\lambda^-$ and integrating by parts on
$(B_\lambda(x_R)\cap \rn)\setminus K$ leads to, as before,
\begin{eqnarray*}
\begin{aligned}
& \int_{(B_\lambda(x_R)\cap \rn)\setminus K} |\nabla w_\lambda^-|^2 dy \\
& \le \int_{(B_\lambda(x_R)\cap \rn)\setminus K} (|\nabla w_\lambda^-|^2 - \frac 1{|y|^s } (u^{2^*(s)-1}_{x_R, \lambda}-u^{2^*(s)-1}) w_\lambda^- dy  \\
& \le C |(B_\lambda(x_R)\cap
\rn)\setminus K|^{\frac 2N} \int_{(B_\lambda(x_R)\cap
\rn)\setminus K}|\nabla w_\lambda^-|^2 dy .
\end{aligned}
\end{eqnarray*}
Now we can fix the value of $\delta$ so that $C |(B_\lambda(x_R)\cap
\rn)\setminus K|^{\frac 2N}<\frac 12$, and we obtain as before
$w_\lambda^-=0$ on $(B_\lambda(x_R)\cap \rn)\setminus K$, i.e.
$$
u_{x_R,  \lambda}(y)\ge u(y),\ \forall\ y\in \ (B_\lambda(x_R)\cap
\rn)\setminus K,\quad \forall\ \bar
\lambda<\lambda<\bar\lambda+\epsilon.
$$
This and (\ref{99}) contradicts to the definition of $\bar
\lambda(R)$. Step 2 is established.

\bigskip

By Step 2, we have
\begin{equation}
u_{x_R, R+a}(y)\ge u(y),\quad \forall\ y\in B_{R+a}(x_R)\cap \rn,\
\forall\ R,a>0. \label{4-1}
\end{equation}
It follows, for every $y\in \rn$, and every $a>y_n$,
$$
u(y)\le \lim_{R\to\infty}u_{x_R, R+a}(y)= u(y_1, \cdots, y_{N-1},
2a-y_N).
$$
The above implies
$$
u(y_1, \cdots, y_{N-1}, s)\le u(y_1, \cdots, y_{N-1}, t),\qquad
\forall\ 0<s<t.
$$
We have proved
$$
\frac{\partial u}{ \partial x_N}\ge 0,\qquad\mbox{in}\ \rn.
$$
Applying $\frac {\partial }{\partial x_N}$ to the equation of $u$
leads to
$$
-\Delta (\frac{\partial u}{ \partial x_N}) +\left( \frac
{2^*(s)-1}{|x|^s} u^{2^*(s)-2} -\frac {N+2}{N-2} u^{ \frac 4{n-2}} \right)(\frac{\partial
u}{ \partial x_N})+ \frac {\partial}{ \partial x_N}(\frac 1{
|x|^s}) u=0,\quad\mbox{in}\ \rn.
$$
By the strong maximum principle, we have $\frac{\partial u}{ \partial
x_n}$ is always zero or strictly positive.  But $u=0$ on the boundary on
$\rn$ and positive in $\rn$, so we must have $\frac{\partial u}{
\partial x_n}>0$ in $\rn$.  Lemma~\ref{lem31} is established. \hfill{$\square$}

The main theorem in this section is the following generalization of Theorem 1.3.
\begin{thm}\label{sec3thm}
Let $s_i \in(0,2]$, $P_i \in \mathbb R^{N-1}$ and let $u(x)\geq 0$ be a solution of
\begin{equation}\label{31equation}
\left\{
\begin{aligned}
&\Delta u-\sum_{i =1}^l\frac{u^{2^*(s_i)-1}}{|x-P_i|^{s_i}}+u^{\frac{N+2}{N-2}}=0\quad\text{in \,}\Bbb R^N_+,\\
&u(x)=0\quad\text{on \,}\partial\Bbb R^N_+.
\end{aligned}
\right.
\end{equation}
Suppose $u\in L^\infty(\Bbb R^N_+)\cap H^1_{loc}(\overline{\Bbb R^N_+})$. Then $u(x)\equiv 0$.
\end{thm}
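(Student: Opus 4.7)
The plan is to follow the strategy of Theorem~\ref{3theorem}: first establish that $u$ is strictly monotone in $x_N$, then translate upward and pass to a limit that violates the Caffarelli--Gidas--Spruck classification on $\mathbb{R}^N$. Since $u \in L^\infty$ is given here (and need not be proved as in Theorem~\ref{3theorem}), the genuinely new work is a version of Lemma~\ref{lem31} adapted to multiple singular points $P_i$ on $\partial \mathbb{R}^N_+$.

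To prove $\partial u/\partial x_N > 0$, I would use the method of moving spheres with Kelvin centers $x_R = (P_0, -R)$ for $P_0 \in \mathbb{R}^{N-1}$ and $R > 0$. Setting $z := x_R + \lambda^2(y - x_R)/|y - x_R|^2$ and $u_{x_R, \lambda}(y) := (\lambda/|y - x_R|)^{N-2} u(z)$, a direct computation yields
\[
  -\Delta u_{x_R, \lambda}(y) + \sum_{i=1}^{l}\Big(\frac{\lambda}{|y - x_R|}\Big)^{2s_i}\frac{u_{x_R,\lambda}^{2^*(s_i)-1}(y)}{|z - P_i|^{s_i}} = u_{x_R, \lambda}^{(N+2)/(N-2)}(y),
\]
so the comparison $u_{x_R, \lambda} \geq u$ reduces, exactly as in Lemma~\ref{lem31}, to verifying the geometric inequality
\[
  \Big(\frac{\lambda}{|y - x_R|}\Big)^{2s_i}\frac{1}{|z - P_i|^{s_i}} \leq \frac{1}{|y - P_i|^{s_i}}
\]
for each $i$ on $B_\lambda(x_R) \cap \mathbb{R}^N_+$. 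Writing $y = x_R + \rho\theta$ and $a_i = x_R - P_i$, this reduces to showing that $\eta_i(t) := t^2/|a_i + t\theta|^2$ satisfies $\eta_i(\lambda^2/\rho) \leq \eta_i(\rho)$, i.e.\ that $\eta_i$ is nonincreasing on $[\rho, \lambda^2/\rho]$. When $P_0 = P_i$ this is exactly the computation carried out in Lemma~\ref{lem31}. For $P_0 \neq P_i$, the critical point of $\eta_i$ shifts away from the boundary value $\rho = R/\theta_N$, and one must exploit the constraint $y_N > 0$ together with a judicious choice of $R$ (large relative to $\max_i |P_0 - P_i|$) to ensure the inequality persists for $\lambda$ in an effective range. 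This is the principal technical obstacle.

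Once the geometric inequality is established, the narrow-domain Sobolev estimate and strong maximum principle (Steps~1 and~2 in the proof of Lemma~\ref{lem31}) yield $u_{x_R, \lambda}(y) \geq u(y)$ on $B_\lambda(x_R) \cap \mathbb{R}^N_+$ for all admissible $\lambda > R$. Sending $R \to \infty$ with $\lambda = R + a$ turns the Kelvin inversion into reflection across the hyperplane $\{x_N = a\}$ and produces $\partial u/\partial x_N \geq 0$; strict positivity follows by applying the strong maximum principle to the equation satisfied by $\partial u/\partial x_N$, as at the end of the proof of Lemma~\ref{lem31}. Finally, the translations $u_j(x', x_N) := u(x', x_N + r_j)$ with $r_j \to \infty$ converge in $C^2_{loc}(\mathbb{R}^N)$, by the $L^\infty$ bound and elliptic regularity, to a bounded nonnegative solution $u_\infty$ of $\Delta u_\infty + u_\infty^{(N+2)/(N-2)} = 0$ on $\mathbb{R}^N$ (each singular term vanishes in the limit because $|(x', x_N + r_j) - P_i| \to \infty$). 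The monotonicity of $u_j$ in $r_j$ is inherited by $u_\infty$, but by Caffarelli--Gidas--Spruck every nontrivial nonnegative solution on $\mathbb{R}^N$ is a rotationally symmetric bubble with a unique maximum, contradicting monotonicity; hence $u_\infty \equiv 0$, and combined with $u(x', x_N) \leq u(x', x_N + r_j) \to u_\infty(x', x_N) = 0$ this forces $u \equiv 0$.
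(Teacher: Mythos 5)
Your proposal has a genuine gap at exactly the point you flag as ``the principal technical obstacle,'' and the paper in fact takes a different route.

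You propose moving spheres with Kelvin center $x_R=(P_0,-R)$, which reduces the comparison to showing that, for each $i$, $\eta_i(t)=t^2/|a_i+t\theta|^2$ with $a_i=x_R-P_i$ is nonincreasing on the relevant $t$-range. Computing
\[
\eta_i'(t)=\frac{2t\bigl(|a_i|^2+t\,a_i\cdot\theta\bigr)}{|a_i+t\theta|^4},
\]
one sees that $\eta_i'\le 0$ requires $a_i\cdot\theta<0$ and $t\ge -|a_i|^2/(a_i\cdot\theta)$. When $P_0=P_i$ this threshold is exactly $\mu_1(\theta)=R/\theta_N$, which is why Lemma~\ref{lem31} closes. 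But when $P_0\neq P_i$ we have $a_i\cdot\theta=(P_0-P_i)\cdot\theta'-R\theta_N$, and for directions $\theta$ with small $\theta_N$ (which occur for all $\lambda$ large, since we only need $\theta_N>R/\lambda$) and $(P_0-P_i)\cdot\theta'>0$ bounded away from zero, $a_i\cdot\theta$ becomes positive and $\eta_i$ is increasing. No choice of $R$ fixes this simultaneously for all $i$ and all admissible $\theta$: enlarging $R$ shrinks $R/\lambda$ proportionally, so the problematic directions persist. This is exactly why the paper's Remark after the theorem notes that the Lemma~\ref{lem31} (moving-sphere) argument survives only when all $P_i$ coincide, in which case the boundedness hypothesis can be dropped.

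The paper instead uses the method of moving planes directly, reflecting across the horizontal hyperplanes $\{y_N=\lambda\}$ rather than inverting in spheres. Writing $y^\lambda=(y',2\lambda-y_N)$, one has $|y^\lambda-P_j|>|y-P_j|$ automatically for every $P_j\in\partial\mathbb R^N_+$ and every $y$ with $0\le y_N<\lambda$, so the forcing term $\sum_j\bigl(|y^\lambda-P_j|^{-s_j}-|y-P_j|^{-s_j}\bigr)u^{2^*(s_j)-1}(y^\lambda)$ has the favorable sign with no geometric calculation required. The price is that moving planes needs control of $u$ at infinity; this is supplied by Step~1 ($u\to0$ as $|x|\to\infty$, proved from $u\in L^\infty$ via translation and the half-space Liouville theorem for $\Delta u+u^{(N+2)/(N-2)}=0$), and the maximum-principle argument is run with the barrier $v(y)=1-y_N^2$ (and then $(\bar\lambda+1)^2-y_N^2$). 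Your concluding translation-to-infinity argument matches the paper's, but to make the monotonicity step work you should replace the moving-sphere scheme by this moving-plane scheme, or else supply a real proof that the inequality $\eta_i(\lambda^2/\rho)\le\eta_i(\rho)$ holds for all $i$ simultaneously.
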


{\bf Proof.}  The main step is to show that $\frac{\partial u}{\partial x_N} \ge 0$
as did in Lemma~\ref{lem31}. 
This second proof could work for the general situation of \eqref{31equation}, but the boundedness
of $u$ is required! The proof is divided into several steps.

{\bf Step 1.} $u(x) \to 0$ as $|x| \to +\infty$.
Suppose not. We may assume there are $x_j \to + \infty$, $u(x_j) \ge C >0$ for some positive
constant $C$. Let $u_j(x)=u(x+x_j)$. By elliptic estimates, $u_j(x)$ is bounded in $C^2$ in
any compact set of $\overline{\rn}$. By passing to a subsequence, we may assume $u_j(x) \to u(x)$
in $C^2_{loc}(\overline{\rn})$ and $u(x)$ satisfies
\begin{equation}
\label{2.14}
\left\{
\begin{aligned}
& \Delta u(x) + u^{\frac{N+2}{N-2}} =0   \qquad \text{ in } \rn, \\
& u \equiv 0  \qquad \text{ on } \partial \rn.
\end{aligned}
\right.  
\end{equation}

But it is well-known that \eqref{2.14} has no positive solutions. Thus, $u \equiv 0$ in $\rn$ which
contradicts to $u(0) \ge C >0$. So, Step 1 is proved.

\medskip

{\bf Step 2.} We claim for any $\lambda > 0$,
$$
u(y^\lambda) > u(y) \qquad \text{ for } x \in \Sigma_{\lambda} = \Big \{(y_1,y_2, \cdots, y_N) \Big|\,\, 0 \le y_N <\lambda \Big\},
$$
where $y^\lambda = (y_1, \cdots, y_{N-1}, 2\lambda - y_N )$.
 This step is a standard application of the method of moving planes. We give a sketch of 
 proofs for the sake of completeness. Let
 $$
 w_\lambda(y) = u(y^\lambda) - u(y).
 $$
 Then we have 
 \begin{eqnarray*}
 \begin{aligned}
 \Delta w_{\lambda}(y)& - \sum_{j=1}^{l} \frac{1}{|y-P_j|^{s_j}} \Big(u^{2^*(s_j)-1}(y^\lambda) - u^{2^*(s_j)-1}(y) \Big) + u^{\frac{N+2}{N-2}}(y^\lambda) - u^{\frac{N+2}{N-2}}(y)\\
  &= \sum_{j=1}^{l} \Big(\frac{1}{|y^\lambda - P_j|^{s_j}}-\frac{1}{|y|^{s_j}}\Big)u^{2^*(s_j)-1}(y^\lambda ) \le 0  \qquad \text{ in } \Sigma_{\lambda}.
 \end{aligned}
 \end{eqnarray*}
 Thus $w_\lambda (y)$ satisfies 
 $$
 \Delta w_\lambda(y)+(C_1(y) + C_2(y) ) w_\lambda(y) \le 0 \qquad \text{ in } \Sigma_\lambda,
 $$
 where 
 \begin{eqnarray*}
 C_1(y) \le 0, \qquad \text{ and } \qquad C_2(y) = \frac{u^{\frac{N+2}{N-2}}(y^{\lambda})-u^{\frac{N+2}{N-2}}(y)}{u(y^\lambda) - u(y)}.
 \end{eqnarray*}
 By Step 1, $C_2(y) = o(1)$ as $|y| \to + \infty$ and $y \in \Sigma_\lambda$.
 
 To prove $w_{\lambda}(y)>0$ in  $\Sigma_{\lambda}$ for $\lambda$ small, we consider
 the comparison function,
 $$
 v(y) = 1- y_N^2, \qquad 0\le y_N \le \lambda,
 $$
 and let
 $$
 \overline{w}_{\lambda}(y) = \frac{w_\lambda(y)}{v(y)}, \qquad \text{i.e.},\,\, w_\lambda(y) = \overline{w}_{\lambda}(y) v(y).  
 $$
Thus, $\overline{w}_{\lambda}$ satisfies
\begin{equation}
\label{2.15}
\left\{
\begin{aligned}
& \Delta \overline{w}_{\lambda}(y) + 2 \frac{\nabla v(y)}{v(y)}\cdot \nabla \overline{w}_{\lambda}(y)
+ \Big(C_1(y) + C_2(y) - \frac{4}{v(y)}\Big) \overline{w}_{\lambda}(y) \le 0, \\
& \overline{w}_{\lambda}(y', 0)>0 \qquad \text{ and } \qquad \overline{w}_\lambda(y', \lambda)=0.
\end{aligned}
\right.
\end{equation} 
Choose $\lambda$ small such that 
$$
\frac{N+2}{N-2} u^{\frac{4}{N-2}}(y) \le 2, \qquad 0 \le y_N \le \lambda.
$$
Now suppose the set $ \Big\{ y \Big|\,\, w_{\lambda}(y) < 0   \Big\} \ne \emptyset$.

 Because $\overline{w}_{\lambda} \ge 0$ on $\partial \Sigma_{\lambda}$ and $\lim_{|y| \to +\infty} \overline{w}_{\lambda}(y)=0$,
it is easy to see the minimum of $\overline{w}_{\lambda}$ can be achieved. 
Let 
$\overline{y}\in \Sigma_\lambda$ such that 
$$
\overline{w}_\lambda(\overline{y}) = \inf_{ y \in \Sigma_{\lambda}} \overline{w}_{\lambda}(y) < 0. 
$$
 Since $\overline{w}_{\lambda}(\overline{y})<0$,
$$
C(\overline{y}) \le \frac{N+2}{N-2} u^{\frac{4}{N-2}}(\overline{y}) \le 2.
$$
By applying the maximum principle, \eqref{2.15} yields
$$
0< \Big(C_1(\overline{y}) + C_2(\overline{y}) - \frac{4}{v(\overline{y})} \Big) \overline{w}_{\lambda}(\overline{y}) \le 0,
$$
which is a contradiction. Hence, $w_{\lambda}(y)>0$ $\forall y \in \Sigma_{\lambda}$.

Let 
$$
\overline{\lambda} = \sup \Big\{ \lambda \Big| \,\, w_{\mu}(y) > 0 \,\,\, \forall y \in \Sigma_{\mu},
0 < \mu \le \lambda \Big\}.
$$
We claim $\overline{\lambda} = +\infty $. Otherwise, we have 
\begin{equation}
\label{2.16}
\left\{
\begin{aligned}
& w_{\overline{\lambda}}(y) > 0 \qquad \forall y \in \Sigma_{\overline{\lambda}},\\
& \frac{\partial w_{\overline{\lambda}}}{\partial y_N}(y', \lambda) <0. 
\end{aligned}
\right.
\end{equation}
by the strong maximum principle and Hopf boundary point lemma. 
By the definition of $\overline{\lambda}$, there are $\lambda_j \downarrow \overline{\lambda}$
such that $\big\{  y \big|\, w_{\lambda_j(y)}<0, \,\, y\in \Sigma_{\lambda_j}  \big\} \ne \emptyset$.
Set
$$
w_{\lambda_j}(y) = \overline{w}_{\lambda_j}(y)v(y),
$$
where
 $$
  v(y)=(\overline{\lambda}+1)^2 - y_N^2, \,\, y \in \Sigma_{\lambda_j}.
$$
Then $\overline{w}_{\lambda_j}$ satisfies
\begin{equation}
\label{2.17}
\begin{aligned}
\Delta \overline{w}_{\lambda_j}(y) & +  2 \nabla \log v(y) \cdot \nabla \overline{w}_{\lambda_j}(y)\\
& +\Big(C_1(y) +C_2(y) -\frac{4}{v(y)}\Big) \overline{w} _{\lambda_j}\le 0
\end{aligned}
\end{equation}
Suppose $\overline{w}_{\lambda_j}(\overline{y}_j)= \inf_{ y \in \Sigma_{\lambda_j}} \overline{w}_{\lambda_j}(y) < 0.$ By \eqref{2.16}, we have $|\overline{y}_j| \to +\infty$. 
Note that 
$$
C_2(\overline{y}_j) \le \frac{N+2}{N-2} u ^{\frac{4}{N-2}}(\overline{y}_j) \to 0 \qquad \text{ as }
j \to +\infty. 
$$
 Again, by the maximum principle, \eqref{2.17} yields a contradiction. Therefore, Step 2 is proved.
 Obviously, the conclusion of Lemma~\ref{lem31} follows immediately from Step 2.  
\hfill{$\square$} 
After the proof $\frac{\partial u}{\partial x_N} > 0$, it is clear from 
the last step in the proof of Theorem~\ref{3theorem}, that the conclusion
of Theorem~\ref{sec3thm}  holds.  \hfill{$\square$}
\begin{rem}
{\rm
If $P_i = P_j$ $\forall i \ne j$, then the proof of Lemma~\ref{lem31} still holds. Hence, in this case, the boundedness assumption is not necessary for the conclusion of Theorem~\ref{sec3thm}.
}
\end{rem}
\section{Existence of Entire Solutions}
\hspace{.6cm}
In this section, we will give a proof of Theorem\,\ref{2theorem}. To prove Theorem\,\ref{2theorem}, we choose a convex domain $\Omega$ with $0\in\partial\Omega$ and consider the following equation. For any small $\varepsilon>0$
\begin{equation}\label{21equation}
\begin{cases}
&\Delta u+\displaystyle\lambda\frac{u^{p_1(\varepsilon)}}{|x|^{s_1}}+\frac{u^{p_2(\varepsilon)}}{|x|^{s_2}}=0\quad\text{in \,}\Omega,\\
&u(x)>0\quad\text{in \,}\Omega\text{ \,and \,}u(x)=0\quad\text{on \,}\partial\Omega,
\end{cases}
\end{equation}
where $p_1(\varepsilon)=2^*(s_1)-1-\varepsilon$ and $p_2(\varepsilon)=2^*(s_2)-1-(\frac{2-s_2}{2-s_1})\varepsilon$.

\medskip

For $\varepsilon>0$, we let
\begin{equation*}
\varPhi_\varepsilon(u):=\frac{1}{2}\int_\Omega|\nabla u|^2dx-\frac{\lambda}{p_1(\varepsilon)+1}\int_\Omega \frac{ (u^+)^{p_1(\varepsilon)+1}}{|x|^{s_1}}dx-\frac{1}{p_2(\varepsilon)+1}\int_\Omega \frac{(u^+)^{p_2(\varepsilon)+1}}{|x|^{s_2}}dx
\end{equation*}
for $u\in H^1_0(\Omega)$ and
\begin{equation*}
c_\varepsilon^*=\displaystyle\inf_{P\in{\cal P}}\max_{w\in P}\varPhi_\varepsilon (w),
\end{equation*}
where ${\cal P}$ is the class of all continuous paths in $H^1_0(\Omega)$ connecting $0$ and some $u_0$ such that $\varPhi_\varepsilon(u_0)\leq 0$.
It is easy to see that $c^*_\varepsilon\leq C$ for some constant $C$ independent of $\varepsilon$.
Since for $\varepsilon>0$, $\varPhi_\varepsilon$ satisfies the P-S condition, it is known that $c^*_\varepsilon$ is a critical point of $\varPhi_\varepsilon$, i.e., there exists a solution $u_\varepsilon\in H^1_0(\Omega)$ with $\varPhi_\varepsilon(u_\varepsilon)=c^*_\varepsilon$. Thus,
\begin{equation}\label{22equation}
\begin{cases}
&\frac{1}{2}\int_\Omega|\nabla u_\varepsilon|^2dx-\frac{\lambda}{p_1(\varepsilon)+1}\int_\Omega \frac{u_\varepsilon^{p_1(\varepsilon)+1}}{|x|^{s_1}}dx-\frac{1}{p_2(\varepsilon)+1}\int_\Omega \frac{u_\varepsilon^{p_2(\varepsilon)+1}}{|x|^{s_2}}dx=c_\varepsilon^*,\\
&\int_\Omega|\nabla u_\varepsilon|^2dx-\lambda\int_\Omega\frac{u_\varepsilon^{p_1(\varepsilon)+1}}{|x|^{s_1}}dx-\int_\Omega\frac{u_\varepsilon^{p_2(\varepsilon)+1}}{|x|^{s_2}}dx=0.
\end{cases}
\end{equation}
From \eqref{22equation}, we have
\begin{equation*}
\left(\frac{1}{2}-\frac{1}{p_1(\varepsilon)+1}\right)\int_\Omega|\nabla u_\varepsilon|^2dx+\left(\frac{1}{p_1(\varepsilon)+1}-\frac{1}{p_2(\varepsilon)+1}\right)\int_\Omega\frac{u_\varepsilon^{p_2(\varepsilon)+1}}{|x|^{s_2}}dx=c_\varepsilon^*.
\end{equation*}
By noting both of $\frac{1}{2}-\frac{1}{p_1(\varepsilon)+1}$ and $\frac{1}{p_1(\varepsilon)+1}-\frac{1}{p_2(\varepsilon)+1}$
are positive, we have
\begin{equation}
\int_\Omega|\nabla u_\varepsilon|^2dx+\int_\Omega \frac{u_\varepsilon^{p_1(\varepsilon)+1}}{|x|^{s_1}}dx+\int_\Omega\frac{u_\varepsilon^{p_2(\varepsilon)+1}}{|x|^{s_2}}dx\leq C<+\infty.
\end{equation}
Therefore, by passing to a subsequence if necessary, we might assume $u_\varepsilon\rightharpoonup u$ in $H^1_0(\Omega)$ as $\varepsilon\to 0$.
If $u\not\equiv 0$, then $u$ is a solution of
\begin{equation}\label{equationcritical}
\begin{cases}
&\Delta u+\lambda\frac{u^{p_1}}{|x|^{s_1}}+\frac{u^{p_2}}{|x|^{s_2}}=0\quad\text{in \,}\Omega,\\
&u=0\quad\text{on \,}\partial\Omega.
\end{cases}
\end{equation}
However, the standard Pohozaev indentity yields that equation\,\eqref{equationcritical} has no positive solutions because both of $p_1$ and $p_2$ are critical exponents.
Thus $u\equiv 0$ and $u_\varepsilon(x)$ must blow up as $\varepsilon\to 0$.

\medskip

Before poceeding further, we will briefly discuss
the regularity of $u$ at $0$. Because $s_1<2$, we can prove that
\begin{equation}
\label{decayestimate}
\begin{cases}
&u\in C^2(\overline\Omega)\quad\text{if } s_1< 1+\frac{2}{n},\\
&u\in C^{1,\beta}(\overline\Omega)\quad\text{for all }0<\beta<1\quad\text{if }s_1=1+\frac{2}{n},\\
&u\in C^{1,\beta}(\overline\Omega)\quad\text{for all
}0<\beta<\frac{n(2-s)}{n-2}\quad\text{if } s_1> 1+\frac{2}{n},
\end{cases}
\end{equation}
see \cite{HLW}.

\medskip

Let
\begin{equation*}
u_\varepsilon(x_\varepsilon)=\max_{\overline\Omega}u_\varepsilon(x)=m_\varepsilon\text{ \,and \,}k_\varepsilon=m_\varepsilon^{-\frac{p_2(\varepsilon)-1}{2-s_2}}.
\end{equation*}
By direct computations, we have
\begin{equation*}
k_\varepsilon=m_\varepsilon^{-\frac{2}{N-2}+\frac{\varepsilon}{2-s_1}}.
\end{equation*}

First, we claim
\begin{equation}\label{firstclaim}
|x_\varepsilon|=O(k_\varepsilon).
\end{equation}
Suppose not. By passing to a subsequence if necessary, we may assume
\begin{equation*}
\lim_{\varepsilon\to 0}\frac{|x_\varepsilon|}{k_\varepsilon}=+\infty.
\end{equation*}
By scaling, we set
\begin{equation*}
\tilde v_\varepsilon(y)=\frac{u_\varepsilon(x_\varepsilon+r_\varepsilon y)}{m_\varepsilon}\quad\text{in \,}\Omega_\varepsilon,
\end{equation*}
where
\begin{equation*}
\Omega_\varepsilon=\{y\in\Bbb R^N\,|\, x_\varepsilon+ r_\varepsilon y\in\Omega\},
\quad\text{and}\quad r_\varepsilon=|x_\varepsilon|^{\frac{s_2}{2}}k_\varepsilon^{\frac{2-s_2}{2}}=(\frac{|x_\varepsilon|}{k_\varepsilon})^{\frac{s_2}{2}}k_\varepsilon.
\end{equation*}
By equation\,\eqref{21equation}, $\tilde v_\varepsilon(y)$ satisfies
\begin{equation*}
\begin{cases}
&\Delta \tilde v_\varepsilon+\lambda(\frac{k_\varepsilon}{|x_\varepsilon|})^{s_1-s_2}\frac{\tilde v_\varepsilon^{p_1(\varepsilon)}}{\left|y_\varepsilon+\frac{r_\varepsilon}{|x_\varepsilon|}y\right|^{s_1}}
+\frac{\tilde v_\varepsilon^{p_2(\varepsilon)}}{\left|y_\varepsilon+\frac{r_\varepsilon}{|x_\varepsilon|}y\right|^{s_2}}=0\quad\text{in \,}\Omega_\varepsilon,\\
&y_\varepsilon=\frac{x_\varepsilon}{|x_\varepsilon|}\quad\text{and}\quad \tilde v_\varepsilon(y)\leq \tilde v_\varepsilon(0)=1.
\end{cases}
\end{equation*}
Let $\Omega_\varepsilon\to H$ as $\varepsilon\to 0$, where either $H=\Bbb R^N$ or $H$ is a closed half space of $\Bbb R^N$.
Note $(\frac{k_\varepsilon}{|x_\varepsilon|})^{s_1-s_2}$ and $\frac{r_\varepsilon}{|x_\varepsilon|}=(\frac{k_\varepsilon}{|x_\varepsilon|})^{\frac{2-s_2}{2}}$ tend to $0$ as $\varepsilon\to 0$.
Then by applying elliptic estimates, $\tilde v_\varepsilon$ converges to $\tilde v$ in $C^2_{loc}(H)$, where
\begin{equation*}
\Delta \tilde v+\tilde v^{p_2}=0\quad\text{in \,}H.
\end{equation*}
If $H$ is a half space of $\Bbb R^N$, then $v$ also satisfies $v=0$ on $\partial H$. Since $p_2=\frac{2(N-s_2)}{N-2}-1<\frac{N+2}{N-2}$,
$v(y)\equiv 0$ in $H$ no matter $H$ is $\Bbb R^N$ or a half space. But it yields a contradiction to $v(0)=1$. Thus, the claim is proved.

\medskip

After \eqref{firstclaim} is established, we set
\begin{equation*}
v_\varepsilon(y)=m_\varepsilon^{-1}u_\varepsilon(x_\varepsilon+k_\varepsilon y).
\end{equation*}
Then $v_\varepsilon(y)$ satisfies
\begin{equation*}
\Delta v_\varepsilon +\lambda\frac{v_\varepsilon^{p_1(\varepsilon)}}{\left|\frac{x_\varepsilon}{k_\varepsilon}+y\right|^{s_1}}+\frac{v_\varepsilon^{p_2(\varepsilon)}}{\left|\frac{x_\varepsilon}{k_\varepsilon}+y\right|^{s_2}}
=0\quad\text{in \,}\Omega_\varepsilon,
\end{equation*}
where $\Omega_\varepsilon=\{y\in\Bbb R^N\,|\, x_\varepsilon+k_\varepsilon y\in\Omega\}$.
Since $\frac{x_\varepsilon}{k_\varepsilon}$ is bounded, without loss of generality, we may assume $\frac{x_\varepsilon}{k_\varepsilon}\to y_0$.
Therefore, $\Omega_\varepsilon\to H$ as $\varepsilon\to 0$, where $H$ is a half spae of $\Bbb R^N$ with $-y_0\in\partial H$
and by the elliptic estimates, $v_\varepsilon(y)\to v(y)$ in $C^2_{loc}(H)$.
Clearly, $v$ satisfies
\begin{equation}
\begin{cases}
&\Delta v+\lambda\frac{v^{p_1}}{|y_0+y|^{s_1}}+\frac{v^{p_2}}{|y_0+y|^{s_2}}=0\quad\text{in \,}H,\\
&v=0\quad\text{on \,}\partial H.
\end{cases}
\end{equation}
Since $v(0)=1$, we have $y_0\ne 0$. By a linear transformation of $y$, $H$ can be map onto $\Bbb R^N_+$ and $v$ is an entire solution of equation\,\eqref{1equation} with $\Omega=\Bbb R^N_+$.
This completes the proof of the existence part of Theorem\,\ref{2theorem}. \hfill{$\square$}

\medskip

\noindent
{\bf Remark.}
Suppose $v$ is a positive entire soltion of \eqref{1equation}. Then the Kelvin transformation $\hat v(y)=|y|^{2-n}v(\frac{y}{|y|^2})$
is also a positive entire solution.
By the regularity \eqref{decayestimate}, $|\hat{v}(y)| \le C|y|$ for $|y| < 1$. Thus,
\begin{equation}
\label{eqq3.8}
|v(y)| \le C |y|^{1-n}  \qquad \text{ for }\,\, |y| \ge 1.
\end{equation}
By the standard gradient estimate, we have 
\begin{equation}
\label{eqq3.9}
|\nabla v(y)| \le |y|^{-n}  \qquad \text{for }\,\,  |y| \ge 1. 
\end{equation}
 By using the well-known method of moving sphere, it can be proved that after a suitable scaling, $v(y)=\hat v(y)$.
Since the argument is standard now, the proof is omitted here.

\begin{cor}\label{21corollary}
There exists an entire solution $v$ of equation\,\eqref{1equation} with $\Omega=\Bbb R^N_+$ such that the critical value
$\varPhi(v)=\inf\{\varPhi(u)\,|\, u\text{ is an entire solution of }\eqref{1equation}\}$.
\end{cor}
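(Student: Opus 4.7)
The plan is to derive Corollary~\ref{21corollary} directly from Theorem~\ref{2theorem} combined with the standard Nehari/fibering characterization of the mountain-pass level $c_*$. Theorem~\ref{2theorem} already supplies a least-energy solution $v \in H^1_0(\mathbb R^N_+)$, which, by the definition of ``least-energy'' recalled in the introduction, means $\varPhi(v) = c_*(\mathbb R^N_+)$. Therefore I only need to show that $\varPhi(u) \ge c_*(\mathbb R^N_+)$ for every positive entire solution $u$; combined with the fact that $v$ itself lies in the set, this forces $\varPhi(v) = \inf\{\varPhi(u) : u \text{ is an entire solution}\}$.

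For this inequality, I would fix a positive entire solution $u$ and analyze the fibering function $\phi(t) := \varPhi(tu)$ for $t \ge 0$. Setting $A = \int |\nabla u|^2$, $B = \int u^{p_1+1}/|x|^{s_1}$ and $C = \int u^{p_2+1}/|x|^{s_2}$, differentiation gives $\phi'(t)/t = A - \lambda t^{p_1-1} B - t^{p_2-1} C$. Because $1 < p_1 < p_2$, a short case analysis (if $\lambda \ge 0$ this function is strictly decreasing in $t$; if $\lambda < 0$ it is strictly increasing and then strictly decreasing, since $p_2-1 > p_1-1$) shows it has a unique positive zero, so $\phi$ has a unique positive critical point, which is automatically its global maximum. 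Since $u$ is itself a critical point of $\varPhi$, testing the PDE against $u$ gives $\phi'(1) = A - \lambda B - C = 0$, so the unique maximizer is $t=1$ and $\max_{t \ge 0} \phi(t) = \varPhi(u)$.

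To convert this into the desired bound on $c_*$, I would observe that $\phi(t) \to -\infty$ as $t \to \infty$ (the coefficient of the dominant term $t^{p_2+1}$ is $-C/(p_2+1) < 0$ and $p_2+1 > 2$), so one can pick $T > 0$ with $\varPhi(Tu) < 0$. The straight-line path $s \mapsto sTu$, $s \in [0,1]$, then lies in $\mathcal P$ and its maximum along the path is $\max_{t \in [0,T]} \phi(t) = \varPhi(u)$. By the very definition of $c_*$, this yields $c_*(\mathbb R^N_+) \le \varPhi(u)$, whence $\varPhi(v) = c_* \le \varPhi(u)$ for every entire solution $u$, and $v$ attains the infimum. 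I do not anticipate any real obstacle here: the only content beyond Theorem~\ref{2theorem} is the fibering identity $\max_{t \ge 0} \varPhi(tu) = \varPhi(u)$ at a critical point, which is forced by $1 < p_1 < p_2$.
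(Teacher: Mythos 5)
Your proof takes a genuinely different route from the paper's, and while one of your ingredients is correct and worth noting, the overall derivation has a circularity problem in the context of this paper.

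\medskip

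\emph{What you do vs.\ what the paper does.}\quad The paper proves the corollary directly: it takes a minimizing sequence $v_j$ of positive entire solutions with $\varPhi(v_j)\downarrow \inf$, uses the Kelvin--invariance normalization $\hat v_j = v_j$ from the preceding remark, passes to a weak limit, and splits into two cases. If $v_j\rightharpoonup v\neq 0$, a Fatou--type argument on the identity $\varPhi(v_j)=\bigl(\tfrac12-\tfrac1{p_1+1}\bigr)\int|\nabla v_j|^2+\bigl(\tfrac1{p_1+1}-\tfrac1{p_2+1}\bigr)\int v_j^{p_2+1}/|y|^{s_2}$ (all coefficients positive) gives $\lim\varPhi(v_j)\ge\varPhi(v)$. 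If $v_j\rightharpoonup 0$, the Kelvin normalization forces $\max_{|y|\le 2}v_j\to\infty$, and the blow-up argument from Section~3 produces another entire solution $w$ with $\lim\tfrac12\varPhi(v_j)\ge\varPhi(w)$, contradicting the positive lower bound $\|\nabla u\|\ge c_0$ for every entire solution. You instead try to deduce the corollary from Theorem~\ref{2theorem} plus the Nehari fibering characterization.

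\medskip

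\emph{What is correct.}\quad Your Nehari step is sound and is a clean observation the paper leaves implicit. For any positive entire solution $u\in H^1_0(\rn)$, setting $A=\int|\nabla u|^2$, $B=\int u^{p_1+1}/|x|^{s_1}$, $C=\int u^{p_2+1}/|x|^{s_2}$, the function $g(t)=A-\lambda t^{p_1-1}B - t^{p_2-1}C$ is, as you argue, either strictly decreasing (if $\lambda\ge 0$) or increasing then decreasing (if $\lambda<0$), with $g(0^+)=A>0$ and $g(\infty)=-\infty$, hence has a unique zero; since $\langle\varPhi'(u),u\rangle=0$ gives $g(1)=0$, the unique fibering maximum is at $t=1$, and the ray through $u$ yields $c_*(\rn)\le\varPhi(u)$.

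\medskip

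\emph{The gap.}\quad You then need a solution $v$ with $\varPhi(v)=c_*(\rn)$ in hand, and you take it from Theorem~\ref{2theorem} ``by the definition of least-energy.'' But the paper's Section~3 argument is explicitly labeled as proving only ``the existence part'' of Theorem~\ref{2theorem}: the blow-up of the subcritical solutions $u_\e$ on a convex domain produces \emph{some} positive entire solution, but the argument never identifies its energy with $c_*(\rn)$. Indeed, tracking the scaling one only obtains $\varPhi(v)\le\liminf c^*_\e(\Omega)$ with $\Omega$ the auxiliary convex domain, and there is no accompanying upper bound $\liminf c^*_\e(\Omega)\le c_*(\rn)$ established there. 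The corollary (combined with exactly the Nehari bound you wrote) is what the paper uses to pin down a minimal-energy candidate, later called $c_1$ in Section~4; it is a step toward completing the ``least-energy'' claim, not a consequence of it. So invoking the full strength of Theorem~\ref{2theorem} to prove Corollary~\ref{21corollary} begs the question within the paper's logical structure. To make your shortcut legitimate you would first have to upgrade the Section~3 blow-up argument to show $\varPhi(v)\le c_*(\rn)$, which is essentially as much work as the paper's direct minimizing-sequence proof.
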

\begin{proof}
We first note that 
$$
\Big\{ \Phi(u)\, \Big|\,  \text{u is a positive entire solution of \eqref{1equation}}\Big\}  \ne \emptyset, 
$$
because
\begin{eqnarray*}
\begin{aligned}
\int_{\rn} |\nabla u|^2 dy & = \lambda \int_{\rn} \frac{u^{2^*(s_1)}}{|x|^{s_1}} dy + \int_{\rn} \frac{u^{2^*(s_2)}}{|x|^{s_2}}dy \\
& \le C \Big[   \Big( \int_{\rn} |\nabla u|^2 dy \Big)^{\frac{2^*(s_1)}{2}} + 
\Big( \int_{\rn} |\nabla u|^2 dy  \Big)^{\frac{2^*(s_2)}{2}}   \Big]
\end{aligned}
\end{eqnarray*}
implies $ \| \nabla u \| \ge c_0 $ for some constant $c_0>0$.

Suppose $v_j$ is a sequence of positive entire solutions of
\begin{equation}
\begin{cases}
&\Delta v_j+\lambda\frac{v_j^{p_1}}{|y|^{s_1}}+\frac{v_j^{p_2}}{|y|^{s_2}}=0\quad\text{in \,}\Bbb R^N_+,\\
&v_j=0\quad\text{on \,}\partial\Bbb R^N_+
\end{cases}
\end{equation}
such that $\varPhi(v_j)\downarrow\inf\{\varPhi(u)\,|\, u\text{ is an entire solution of }\eqref{1equation}\}$.
By the remark above, we can assume $\hat v_j(y)=v_j(y)$. By \eqref{22equation} again, $\|\nabla v_j\|_{L^2(\Bbb R^N_+)}\leq C$ for some
constant $C>0$. Let $v_j\rightharpoonup v$ in $H^1_0(\Bbb R^n_+)$. If $v\ne 0$, then
\begin{align*}
\lim_{j\to +\infty}\varPhi(v_j)&=(\frac{1}{2}-\frac{1}{p_1+1})\lim_{j\to\infty}\int_{\Bbb R^N_+}|\nabla v_j|^2dy+(\frac{1}{p_1+1}-\frac{1}{p_2+1})\lim_{j\to+\infty}\int_{\Bbb R^N_+}\frac{v_j^{p_2+1}}{|y|^{s_2}}dy\\
&\geq(\frac{1}{2}-\frac{1}{p_1+1})\int_{\Bbb R^N_+}|\nabla v|^2dy+(\frac{1}{p_1+1}-\frac{1}{p_2+1})\int_{\Bbb R^N_+}\frac{v^{p_2+1}}{|y|^{s_2}}dy=\varPhi(v).
\end{align*}
Then it yields the conclusion of Corollary\,\ref{21corollary}.

\medskip

If $v_j\rightharpoonup 0$, then $\displaystyle\max_{|y|\leq 2}v_j(y)\to+\infty$, because $\hat v_j(y)=v_j(y)$ implies
\begin{align*}
\frac{1}{2}\varPhi(v_j)&=\frac{1}{2}\int_{B_1}|\nabla v_j|^2dy-\frac{\lambda}{p_1+1}\int_{B_1}\frac{v_j^{p_1+1}}{|y|^{s_1}}dy-\frac{1}{p_2+1}\int_{B_2}\frac{v_j^{p_2+1}}{|y|^{s_2}}dy\\
&=(\frac{1}{2}-\frac{1}{p_1+1})\int_{B_1}|\nabla v_j|^2dy+(\frac{1}{p_1+1}-\frac{1}{p_2+1})\int_{B_2}\frac{v_j^{p_2+1}}{|y|^{s_2}}dy.
\end{align*}

By the proof of Theorem\,\ref{2theorem}, we see that $v_j$ blows up at $y=0$ and the scaling $w_j(y)$:
\begin{equation*}
w_j(y)=\frac{v_j(x_j+k_j y)}{v_j(x_j)}\to w,
\end{equation*}
where $v_j(x_j)=\displaystyle\max_{|y|\leq 2}v_j(y)\to \infty$, $k_j=m_j^{-\frac{2}{N-2}}$, and $w$ is also a positive entire solution of equation\,\eqref{1equation}.
Thus,
\begin{equation*}
\lim_{j\to +\infty}\frac{1}{2}\varPhi(v_j)\geq (\frac{1}{2}-\frac{1}{p_1+1})\int_{\Bbb R^N_+}|\nabla w|^2dy+(\frac{1}{p_1+1}-\frac{1}{p_2+1})\int_{\Bbb R^N_+}\frac{w^{p_2+1}}{|y|^{s_2}}dy=\varPhi(w),
\end{equation*}
which yields
\begin{equation*}
\inf\{
\varPhi(u)\,|\, u\text{ is an entire solution of equation\,\eqref{1equation}}=0
\},
\end{equation*}
a contradiction. Hence, $v_j\not\rightharpoonup 0$, and Corollary\,\ref{21corollary} is proved.
\end{proof}

\section{Proof of Theorem\,\ref{1theorem}}
\hspace{.6cm}
Let $v(y)$ be a least-energy solution of
\begin{equation}\label{eqentvy}
\begin{cases}
&\Delta v+\lambda\frac{v^{p_1}}{|y|^{s_1}}+\frac{v^{p_2}}{|y|^{s_2}}=0\quad\text{in\ \,}\Bbb R^N_+,\\
&v(y)>0\quad\text{in \,}\Bbb R^N_+\quad\text{and}\quad v(y)=0\quad\text{on \,}\partial\Bbb R^N_+,
\end{cases}
\end{equation}
and
\begin{equation}
\label{eqqq4.2}
c_1=\varPhi(v)=\frac{1}{2}\int_{\Bbb R^N_+}|\nabla v|^2dy-\frac{\lambda}{2^*(s_1)}\int_{\Bbb R^N_+}\frac{v^{2^*(s_1)}}{|y|^{s_1}}dy
-\frac{1}{2^*(s_2)}\int_{\Bbb R^N_+}\frac{v^{2^*(s_2)}}{|y|^{s_2}}dy.
\end{equation}

\begin{lem}\label{lem41critical}
Suppose that $\Omega$ is a bounded smooth domain in $\Bbb R^N$ with $0\in\partial\Omega$. If $H(0)<0$, then there exists a nonnegative function $v_0\in H^1_0(\Omega)\setminus\{0\}$
such that
\begin{equation*}
\max_{t\geq 0}\varPhi(t v_0)<c_1.
\end{equation*}
\end{lem}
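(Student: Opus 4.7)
The plan is to build an explicit test function $v_0$ by scaling and transplanting the entire least-energy solution $v$ on $\Bbb R^N_+$ (given by Corollary~\ref{21corollary}) into $\Omega$ via a boundary-straightening diffeomorphism centered at $0$, and then to read off the sign of the first-order-in-$\varepsilon$ correction to $\max_{t\ge 0}\varPhi(tv_0)$. The hypothesis $H(0)<0$ enters through this correction, pulling the maximum strictly below $c_1$. This is the standard Brezis--Nirenberg/Aubin-type mechanism and I plan to follow closely the analogous computations in \cite{GR, CLW, HLW}.

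\textbf{Step 1 (test function).} Choose local coordinates $\Psi\colon B_\rho^+\to V\subset\overline{\Omega}$ with $\Psi(0)=0$, $D\Psi(0)=I$, and $\Psi(\{y_N=0\})\subset\partial\Omega$. Then $\partial\Omega$ is locally the graph of $y_N=\tfrac12\sum_{\alpha<N}\kappa_\alpha y_\alpha^2+O(|y'|^3)$ with $\sum_\alpha\kappa_\alpha=(N-1)H(0)$. Fix $\eta\in C_c^\infty(B_\rho)$ with $\eta\equiv 1$ on $B_{\rho/2}$ and, for small $\varepsilon>0$, set
\[
v_0(x)=v_\varepsilon(x):=\eta(\Psi^{-1}(x))\,\varepsilon^{-(N-2)/2}\,v\!\bigl(\varepsilon^{-1}\Psi^{-1}(x)\bigr), \qquad x\in\Omega,
\]
extended by zero elsewhere.

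\textbf{Step 2 (asymptotic expansions).} Using $|v(y)|\le C|y|^{1-N}$ and $|\nabla v(y)|\le C|y|^{-N}$ from \eqref{eqq3.8}--\eqref{eqq3.9}, the tail and cutoff errors are $O(\varepsilon^N)$, strictly smaller than $\varepsilon$. Changing variables $x=\Psi(\varepsilon\tilde y)$ and expanding the pulled-back metric and Jacobian to second order in $\varepsilon\tilde y$ yields
\begin{align*}
\int_\Omega |\nabla v_\varepsilon|^2\, dx &= \int_{\Bbb R^N_+}|\nabla v|^2\, dy + A\,H(0)\,\varepsilon + o(\varepsilon),\\
\int_\Omega \frac{v_\varepsilon^{2^*(s_i)}}{|x|^{s_i}}\, dx &= \int_{\Bbb R^N_+}\frac{v^{2^*(s_i)}}{|y|^{s_i}}\, dy + B_i\,H(0)\,\varepsilon + o(\varepsilon),\quad i=1,2,
\end{align*}
with constants $A,B_1,B_2\in\Bbb R$ given by explicit $\Bbb R^N_+$-integrals of $v$ and $|\nabla v|$ against symmetric quadratic weights in $y'$.

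\textbf{Step 3 (maximization over $t$).} Let $g_\varepsilon(t)=\varPhi(tv_\varepsilon)$. As noted around \eqref{4equation}, $g_\varepsilon$ has a unique positive maximizer $t_\varepsilon$. Since $v$ solves \eqref{eqentvy}, the value $t=1$ maximizes the unperturbed functional $\varPhi(tv)$ on $\Bbb R^N_+$, attaining $c_1$; hence $t_\varepsilon\to 1$. Inserting $t_\varepsilon$ into $\varPhi(tv_\varepsilon)$ and using $g_\varepsilon'(t_\varepsilon)=0$ to kill the first variation, the three expansions above combine into
\[
\max_{t\ge 0}\varPhi(tv_\varepsilon)=c_1+C_*\,H(0)\,\varepsilon+o(\varepsilon)
\]
for some constant $C_*>0$. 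Since $H(0)<0$, this is strictly less than $c_1$ for all small enough $\varepsilon$, and the lemma follows with $v_0=v_\varepsilon$.

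\textbf{Main obstacle.} The delicate point is to verify $C_*>0$. With two distinct critical exponents present, the $O(\varepsilon)$ contributions from the Dirichlet term and from each of the two nonlinearities do not simply cancel as in the single-exponent setting, and one must invoke a Pohozaev-type identity for $v$ on $\Bbb R^N_+$ together with the integrability of the curvature-weighted integrals (which in turn rests on \eqref{eqq3.8}--\eqref{eqq3.9} and on $0<s_2<s_1<2$) to arrange their sum as a strictly positive multiple of $H(0)$.
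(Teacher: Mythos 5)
Your Steps 1--2 set up exactly the paper's test function: the boundary-straightening map, the cutoff, the $\varepsilon^{-(N-2)/2}v(\cdot/\varepsilon)$ scaling, and the use of the decay bounds \eqref{eqq3.8}--\eqref{eqq3.9} to control the tails. The form of the three expansions is also correct. But the argument is not complete: Step 3 asserts a constant $C_*>0$, and your ``Main obstacle'' paragraph then concedes that verifying $C_*>0$ is precisely what is left to do. That verification \emph{is} the lemma; without it you have not shown $\max_t\varPhi(tv_0)<c_1$.

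Your diagnosis of what is missing is also off. You expect that the $O(\varepsilon)$ corrections from the Dirichlet energy and from the two weighted nonlinear integrals fail to cancel and that a Pohozaev-type identity must be imported to fix their combined sign. In the paper the cancellation is exact and elementary. The cross term $-2\int\eta^2\,\partial_N v\,\nabla' v\cdot\nabla'\varphi(\varepsilon y')\,dy$ in the expansion of $\int_\Omega|\nabla u_\varepsilon|^2$ is integrated by parts and the equation \eqref{eqentvy} for $v$ is substituted, splitting it into $I_1+I_2+I_3$. The pieces $I_2,I_3$ contribute $-K_2H(0)\varepsilon$ and $-K_3H(0)\varepsilon$, and at $t=1$ these cancel term-by-term against the $O(\varepsilon)$ corrections to $\lambda\int u_\varepsilon^{2^*(s_1)}/|x|^{s_1}$ and $\int u_\varepsilon^{2^*(s_2)}/|x|^{s_2}$. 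What survives is the boundary piece $I_1$, governed by
\begin{equation*}
K_1=\int_{\Bbb R^{N-1}}|\partial_N v(y',0)|^2\,|y'|^2\,dy'>0,
\end{equation*}
which is manifestly positive as an integral of a square against a nonnegative weight. Writing $\varPhi(tu_\varepsilon)=f_1(t)+f_2(t)+O(\varepsilon^2)$ with $f_1$ maximized at $t=1$ with value $c_1$, one reads off $f_2(1)$ as a positive multiple of $\varepsilon H(0)K_1<0$, and since $t_\varepsilon=1+o(1)$ this immediately gives $\max_t\varPhi(tu_\varepsilon)\le f_1(t_\varepsilon)+f_2(t_\varepsilon)+O(\varepsilon^2)<c_1$. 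No separate Pohozaev identity is invoked, and the sign control comes from a single unambiguously positive constant. To make your proposal a proof you must carry out this computation; as written it stops at the decisive step.
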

\begin{proof}
Without loss of generality, we may assume that in a neighborhood of $0$, $\partial\Omega$ can be represented by
$x_n=\varphi(x')$, where $x'=(x_1,\cdots, x_{N-1})$, $\varphi(0)=0$, $\nabla'\varphi(0)=0$, $\nabla'=(\partial_1,\cdots,\partial_{N-1})$,
and the outer normal of $\partial\Omega$ at $0$ is $-e_N=(0,\cdots,0,-1)$. Define
\begin{equation*}
\phi(x):=(x', x_N-\varphi(x')).
\end{equation*}
We choose a small positive number $r_0$ so that there exist neighborhoods of $0$, $U$ and $\tilde U$, such that
$\phi(U)=B_{r_0}(0)$, $\phi(U\cap\Omega)=B_{r_0}^+(0)$, $\phi(\tilde U)=B_{\frac{r_0}{2}}(0)$ and $\phi(\tilde U\cap\Omega)=B_{\frac{r_0}{2}}^+(0)$.
Here, we adopt the notation:
\begin{equation*}
B_{r_0}^+(0)=B_{r_0}\cap\Bbb R^N_+\quad\text{for \,}r_0>0.
\end{equation*}
Let $\eta\in C_0^\infty(U)$ be a positive cut-off function with $\eta\equiv 1$ in $\tilde U$. Set
\begin{equation*}
u_\e(x):=\eta(x)v_\e(x):=\eta(x)\e^{-\frac{N-2}{2}}v\left(\frac{\phi(x)}{\e}\right)\quad\text{for \,}x\in \Omega.
\end{equation*}
For $t\geq 0$, we have
\begin{equation}\label{eachintest}
\varPhi(t u_\e)=\frac{t^2}{2}\int_\Omega|\nabla u_\e|^2dx-\frac{\lambda t^{2^*(s_1)}}{2^*(s_1)}\int_\Omega\frac{u_\e^{2^*(s_1)}}{|x|^{s_1}}dx
-\frac{t^{2^*(s_2)}}{2^*(s_2)}\int_\Omega\frac{u_\e^{2^*(s_2)}}{|x|^{s_2}}dx\quad\text{for \,}u\in H^1_0(\Omega).
\end{equation}

In what follows, we estimate each integral on the right-hand side of \eqref{eachintest}.
Basically, the computation will be similar to Lemma\,2.2 in \cite{CLW}.
For the sake of completeness, we will sketch the proof here. We refer the readers to \cite{CLW} for details of computation.

\medskip

By the change of the variable $\frac{\phi(x)}{\e}=y$, we get
\begin{align*}
&\int_\Omega |\nabla u_\e|^2dx=\int_{U\cap\Omega}\eta^2|\nabla v_\e|^2dx-\int_{U\cap\Omega} \eta(\Delta \eta)v_\e^2dx\\
&=\int_{\Bbb R^N_+}|\nabla v(y)|^2dy-2\int_{B_{\frac{r_0}{\e}}^+}\eta\left(\phi^{-1}(\e y)\right)^2\partial_N v(y)\nabla'v(y)\cdot(\nabla'\varphi)(\e y')dy+O(\e^2).
\end{align*}
By using integration by parts and equation\,\eqref{eqentvy}, the second term can be estimated as the following.
\begin{align*}
&-2\int_{B_{\frac{r_0}{\e}}^+}\eta\left(\phi^{-1}(\e y)\right)^2\partial_N v(y)\nabla' v(y)\cdot(\nabla' \varphi)(\e y')dy\\
&=\frac{2}{\e}\int_{B_{\frac{r_0}{\e}}^+}\eta\left(\phi^{-1}(\e y)\right)^2\partial_N v(y)\sum_{i=1}^{N-1}\partial_{ii}v(y)\varphi(\e y')+O(\e^2)\\
&=-\frac{1}{\e}\int_{B_{\frac{r_0}{\e}}^+}\eta\left(\phi^{-1}(\e y)\right)^2\partial_N\left[(\partial_N v(y))^2\right]\varphi(\e y')dy\\
&-\frac{2\lambda}{2^*(s_1)\e}\int_{B_{\frac{r_0}{\e}}^+}\eta\left(\phi^{-1}(\e y)\right)^2\frac{\partial_N\left[v(y)^{2^*(s_1)}\right]}{|y|^{s_1}}\varphi(\e y')dy\\
&-\frac{2}{2^*(s_2)\e}\int_{B_{\frac{r_0}{\e}}^+}\eta\left(\phi^{-1}(\e y)\right)^2\frac{\partial_N\left[v(y)^{2^*(s_2)}\right]}{|y|^{s_2}}\varphi(\e y')dy+O(\e^2)\\
&=:I_1+I_2+I_3+O(\e^2).
\end{align*}
Since $\partial\Omega$ is $C^2$ at $0$, $\varphi$ can be expanded as
\begin{equation*}
\varphi(y')=\sum_{i=1}^{N-1}\alpha_i y_i^2+o(1)|y'|^2.
\end{equation*}
Hence,
\begin{align*}
I_1&=\frac{1}{\e}\int_{B_{\frac{r_0}{\e}}^+\cap\partial\Bbb R^N_+}\eta\left(\phi^{-1}(\e y')\right)^2\partial_Nv(y',0)\varphi(\e y')dy'\\
&=\e\sum_{i=1}^{N-1}\alpha_i\int_{\Bbb R^{N-1}}\left(\partial_N v(y',0)\right)^2y_i^2dy'(1+o(1))+O(\e^2)
=K_1 H(0)\e(1+o(1))+O(\e^2),
\end{align*}
where
\begin{equation*}
K_1=\int_{\Bbb R^{N-1}}|\partial_N v(y',0)|^2|y'|^2dy',
\end{equation*}
\begin{align*}
I_2&=-\frac{2\lambda s_1}{2^*(s_1)\e}\int_{B_{\frac{r_0}{\e}}^+}\eta\left(\phi^{-1}(\e y)\right)^2\frac{v(y)^{2^*(s_1)}y_N}{|y|^{2+s_1}}\varphi(\e y')dy\\
&=-K_2 H(0)(1+o(1))\e+O(\e^2),
\end{align*}
where
\begin{equation*}
K_2=\frac{2\lambda s_1}{2^*(s_1)}\int_{\Bbb R^N_+}\frac{v(y)^{2^*(s_1)}|y'|^2y_N}{|y|^{2+s_1}}dy,
\end{equation*}
and
\begin{align*}
I_3&=-\frac{2s_2}{2^*(s_2)\e}\int_{B_{\frac{r_0}{\e}}^+}\eta\left(\phi^{-1}(\e y)\right)^2\frac{v(y)^{2^*(s_2)}|y'|^2y_N}{|y|^{2+s_2}}\varphi(\e y')dy\\
&=-K_3H(0)\e (1+o(1))+O(\e^2),
\end{align*}
where
\begin{equation*}
K_3=\frac{2s_2}{2^*(s_2)}\int_{\Bbb R^N_+}\frac{v(y)^{2^*(s_2)}|y'|^2y_N}{|y|^{2+s_2}}dy.
\end{equation*}
By \eqref{eqq3.8} and \eqref{eqq3.9}, $K_i$, $i=1,2,3,$ are finite. Therefore,
 we have
\begin{equation*}
\int_\Omega|\nabla u_\e|^2dx=\int_{\Bbb R^N_+}|\nabla v|^2dy+\e H(0)(K_1-K_2-K_3)(1+o(1))+O(\e^2),
\end{equation*}
and similarly, we have
\begin{align*}
\lambda\int_\Omega\frac{u_\e^{2^*(s_1)}}{|x|^{s_1}}dx
&=\lambda\int_{\Bbb R^N_+}\frac{v^{2^*(s_1)}}{|y|^{s_1}}dy-\frac{s\lambda}{\e}\int_{B_{\frac{r_0/2}{\e}}^+}\frac{v(y)^{2^*(s_1)} y_N\varphi(\e y')}{|y|^{2+s_1}}dy+O(\e^2)\\
&=\lambda\int_{\Bbb R^N_+}\frac{v^{2^*(s_1)}}{|y|^{s_1}}dy-2^*(s_1)K_2 H(0)\e(1+o(1))+O(\e^2),
\end{align*}
and
\begin{equation*}
\int_\Omega\frac{u_\e^{2^*(s_2)}}{|x|^{s_2}}dx=\int_{\Bbb R^N_+}\frac{v^{2^*(s_2)}}{|y|^{s_2}}dy
-2^*(s_2)K_3 H(0)\e(1+o(1))+O(\e^2).
\end{equation*}
Therfore,
\begin{align*}
\varPhi(t u_\e)&=\frac{t^2}{2}\int_{\Bbb R^N_+}|\nabla v|^2dy-\frac{\lambda t^{2^*(s_1)}}{2^*(s_1)}\int_{\Bbb R^N_+}\frac{v^{2^*(s_1)}}{|y|^{s_1}}dy
-\frac{t^{2^*(s_2)}}{2^*(s_2)}\int_{\Bbb R^N_+}\frac{v^{2^*(s_2)}}{|y|^{s_2}}dy\\
&+\frac{t^2}{2}\left(\e H(0)(K_1-K_2-K_3)+o(1)\right)+\frac{t^{2^*(s_1)}}{2}K_2H(0)(1+o(1))\\
&+\frac{t^{2^*(s_2)}}{2}K_3H(0)(1+o(1))\e+O(\e^2)=:f_1(t)+f_2(t)+O(\e^2),
\end{align*}
where
\begin{equation*}
f_1(t)=\frac{t^2}{2}\int_{\Bbb R^N_+}|\nabla v|^2dy-\frac{\lambda t^{2^*(s_1)}}{2^*(s_1)}\int_{\Bbb R^N_+}\frac{v^{2^*(s_1)}}{|y|^{s_1}}dy
-\frac{t^{2^*(s_2)}}{2^*(s_2)}\int_{\Bbb R^N_+}\frac{v^{2^*(s_1)}}{|y|^{s_1}}dy.
\end{equation*}
Since $2^*(s_2)>2^*(s_1)$, $\varPhi(t u_\e)$ has the unique maximum. Note that
\begin{equation*}
\max_{t\geq 0}f_1(t)=f_1(1)=c_1.
\end{equation*}
Hence, the maximum of $\varPhi(t u_\e)$ occurs at $t_\e=1+o(1)$.
By noting that
\begin{equation*}
f_2(t)=\e H(0)\left[
\frac{t^2}{2}(K_1-K_2-K_3+o(1))+\frac{t^{2^*(s_1)}}{2}K_2(1+o(1))+\frac{t^{2^*(s_2)}}{2}K_3(1+o(1))
\right],
\end{equation*}
and $f_2(1)=\e H(0)K_1(1+o(1))<0$. Hence, we have
\begin{equation*}
\max_{t\geq 0}\varPhi(t u_\e)=\varPhi(t_\e u_\e)\leq f_1(t_\e)+f_2(t_\e)<f_1(t_\e)\leq f_1(1)=c_1.
\end{equation*}
Thus Lemma\,\ref{lem41critical} is proved.
\end{proof}

We are now in a position to prove Theorem~\ref{1theorem}

{\bf Proof of Theorem~\ref{1theorem}.}
As before, we let for small positive $\e$, 
$$
\Phi_{\e}(u) = \frac{1}{2} \int_{\Omega} |\nabla u|^2 dx - \frac{\lambda}{ p_1(\e)+1} \int_{\Omega} \frac{(u^+)^{p_1(\e) +1 }}{|x|^{s_1}} dx -\frac{1}{ p_2(\e) +1}\int_{\Omega} \frac{(u^+)^{p_2(\e) +1} }{|x|^{s_2}} dx,
$$
where $p_1(\e) = 2^*(s_1)-1-\e$ and $p_2(\e)=2^*(s_2)-1-\Big( \frac{2-s_2}{2-s_1}\Big) \e$. By Lemma~\ref{lem41critical}, there exists $u_0 \in H^1_0(\Omega)$ such that $\Phi_{\e}(u_0)\le 0$ and 
$$
c^*_{\e} = \inf_{P\in \cal P } \max_{w \in P} \Phi_{\e}(w),
$$
where $\cal P$ is the class of all continuous paths in $H^1_0(\Omega)$ connecting $0$ with $u_0$. Let $c_1$ be the positive constant defined by \eqref{eqqq4.2}. Then Lemma~\ref{lem41critical} yields
\begin{equation}
\label{eqq4.4}
c^*_{\e} < c_1,
\end{equation}
provided that $\e \in [0, \e_0]$ for some small $\e_0>0$.  For $\e >0$, there
exists a solution $u_{\e} \in H^1_0(\Omega)$ of 
\begin{eqnarray*}
\begin{aligned}
& \Delta u_{\e} + \lambda \frac{u^{p_1(\e)}}{|x|^{s_1}} + \frac{u^{p_2(\e)}}{|x|^{s_2}} = 0 \qquad \text{in } \Omega, \\
& u_{\e} =0, \qquad \text{on } \partial \Omega,
\end{aligned}
\end{eqnarray*}
and $\Phi_{\e} (u_{\e}) = c_{\e}$. Similar to  \eqref{22equation}, we have
$$
\int_{\Omega} |\nabla u_{\e}|^2 dx \le C_1,
$$
for some constant $C_1$ independent of $\e$.
By passing to a subsequence if necessary, we may assume 
$$
u_\e  \rightharpoonup  u  \qquad \text{in } \qquad H^1_0(\Omega).
$$
If $u \not \equiv$ 0, then clearly $u$ is a solution of \eqref{1equation} and Theorem~\ref{1theorem} is proved. So it remains to prove $u \not \equiv 0$ in $\Omega$.

Suppose $u \equiv 0$.  As in Section 3, there exists $x_{\e} \in \Omega$ such that 
$$
u_\e (x_\e) = \max u_{\e} (x) = m_\e \to + \infty,
$$
and after a linear transformation on $y$, we have 
$$
v_\e(y)  = \frac{u_\e (x_\e + k_\e y)}{m_\e} \to v(y)  \qquad \text{ in } \qquad C^2_{loc} (\rn),
$$
where $k_{\e} = m_\e^{-\frac{2}{N-2}  +\frac{\e}{ 2-s_1}}$ and $v$ satisfies
\begin{eqnarray*}
\begin{aligned}
& \Delta v + \lambda \frac{v^{p_1}}{|y|^{s_1}} + \frac{v^{p_2}}{|y|^{s_2}} =0 \qquad \text{in } \qquad \rn, \\
& v=0 \qquad \text{ on }  \qquad \rn.
\end{aligned}
\end{eqnarray*}
Also by a direct computation, we have 
\begin{equation}
\label{eqq4.5}
\liminf_{\e \to 0} \int_{\Omega} |\nabla u_{\e}|^2 dx \ge \int_{\rn} |\nabla v|^2 dy,
\end{equation}
and
\begin{equation}
\label{eqq4.6}
\lim_{\e \to 0} \int_{\Omega} \frac{u^{p_2(\e) +1}}{|x|^{s_2}} dx \ge \int_{\rn} \frac{v^{p_2 + 1}}{|y|^{s_2}} dy.
\end{equation}
By \eqref{22equation}, we have
$$
c_{\e}^*= (\frac{1}{2} - \frac{1}{p_1(\e) + 1}) \int_{\Omega} |\nabla u_\e|^2 dx
+\Big( \frac{1}{p_1(\e)+1} - \frac{1}{p_2(\e) + 1} \Big) \int_{\Omega}\frac{u_{\e}^{p_2(\e) +1}}{|x|^{s_2}} dx. 
$$
Thus, \eqref{eqq4.5} and \eqref{eqq4.6} yields 
\begin{eqnarray*}
c^* = \lim_{\e \to 0} c^*_{\e} = (\frac{1}{2} - \frac{1}{ 2^*(s_1)} ) \int_{\rn} |\nabla v|^2 dy +(\frac{1}{2^*(s_1)} - \frac{1}{2^*(s_2)}) \int_{\rn} \frac{v^{2^*(s_2)}}{|y^{s_2}|} dy = c_1,
\end{eqnarray*} 
which contradicts to \eqref{eqq4.4}. Hence, $u \not \equiv 0$, and then Theorem~\ref{1theorem} is proved.

\section{The Case $s_2=0$}
\hspace{.6cm}
As discussed in Introduction, equation\,\eqref{1equation} with $\lambda<0$ has no least-energy solutions.
In this sectin, we will consider a perturbed equation from equation\,\eqref{1equation}:
\begin{equation}\label{41equation}
\begin{cases}
&\Delta u-\frac{u^{2^*(s)-1}}{|x|^s}+u^p+u^{\frac{N+2}{N-2}}=0\quad\text{in \,}\Omega,\\
&u(x)>0\quad\text{in \,}\Omega\text{ \,and \,}u(x)=0\quad\text{on \,}\partial\Omega.
\end{cases}
\end{equation}
\begin{thm}\label{41thm}
Suppose $2^*(s)-1<p<\frac{N+2}{N-2}$ and $N\geq 4$. Then equation\,\eqref{41equation} has a positive solution.
\end{thm}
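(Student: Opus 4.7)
The plan is a mountain-pass argument in the spirit of Brezis--Nirenberg, where the subcritical perturbation $u^p$ pulls the minimax level below the Palais--Smale threshold $\frac{1}{N}S_N^{N/2}$. Define
\begin{equation*}
\Phi(u)=\frac12\int_\Omega|\nabla u|^2\,dx+\frac{1}{2^*(s)}\int_\Omega\frac{(u^+)^{2^*(s)}}{|x|^s}\,dx-\frac{1}{p+1}\int_\Omega(u^+)^{p+1}\,dx-\frac{N-2}{2N}\int_\Omega(u^+)^{\frac{2N}{N-2}}\,dx.
\end{equation*}
Mountain-pass geometry is immediate: every nonlinear term has exponent $>2$, so $\Phi(u)\ge c_0>0$ on a small sphere, and $\Phi(tu_0)\to-\infty$ for any nonzero $u_0\ge 0$ since $\frac{N+2}{N-2}>p>2^*(s)-1$. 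Let $c^*$ be the corresponding mountain-pass value.

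The second step is a concentration-compactness argument establishing that the Palais--Smale condition holds at every level $c<\frac1N S_N^{N/2}$. Because the Hardy--Sobolev term enters the functional with a \emph{favorable} sign, the only mechanism for loss of compactness is Sobolev-type bubbling, which carries energy at least $\frac1N S_N^{N/2}$; hence any PS sequence below this threshold is precompact after subtracting its weak limit, and $c^*$ is attained as soon as $c^*<\frac{1}{N}S_N^{N/2}$.

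The heart of the proof is to establish this strict inequality. Pick an interior point $x_0\in\Omega\setminus\{0\}$ and a cutoff $\eta$ supported near $x_0$, away from both $0$ and $\partial\Omega$, and set
\begin{equation*}
u_\varepsilon(x)=\eta(x)\,\varepsilon^{-(N-2)/2}U\!\bigl((x-x_0)/\varepsilon\bigr),\qquad U(y)=\frac{c_N}{(1+|y|^2)^{(N-2)/2}}.
\end{equation*}
Standard Talenti estimates give $\int|\nabla u_\varepsilon|^2=S_N^{N/2}+O(\varepsilon^{N-2})$ and $\int u_\varepsilon^{2^*}=S_N^{N/2}+O(\varepsilon^N)$. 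Since $|x|^{-s}\sim|x_0|^{-s}$ on the support of $u_\varepsilon$ and a direct rescaling gives $\int U_\varepsilon^{2^*(s)}=O(\varepsilon^s)$ (the exponent $N-(N-2)2^*(s)/2$ equals $s$), we obtain $\int u_\varepsilon^{2^*(s)}/|x|^s=O(\varepsilon^s)$. For the subcritical term, $\int u_\varepsilon^{p+1}\sim\varepsilon^{N-(N-2)(p+1)/2}$. Evaluating $\max_{t\ge 0}\Phi(tu_\varepsilon)$ at its maximizer $t_\varepsilon\to 1$ yields
\begin{equation*}
\max_{t\ge 0}\Phi(tu_\varepsilon)=\frac1N S_N^{N/2}+O(\varepsilon^{N-2})+O(\varepsilon^s)-c\,\varepsilon^{N-(N-2)(p+1)/2}.
\end{equation*}
The hypothesis $p>2^*(s)-1$ is equivalent to $N-(N-2)(p+1)/2<s$, so the subcritical gain dominates the Hardy--Sobolev error; the hypothesis $N\ge 4$ combined with $p+1>2$ gives $N-(N-2)(p+1)/2<N-2$, so it also dominates the Sobolev concentration error. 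Hence $\max_t\Phi(tu_\varepsilon)<\frac1N S_N^{N/2}$ for small $\varepsilon$, and consequently $c^*<\frac{1}{N}S_N^{N/2}$.

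The main obstacle is the simultaneous balancing of the three small parameters $\varepsilon^{N-2}$, $\varepsilon^s$, and $\varepsilon^{N-(N-2)(p+1)/2}$; this is precisely where both hypotheses of the theorem are needed, and the restriction $N\ge 4$ cannot be dropped by this method. With $c^*$ as an admissible critical level, mountain pass produces a critical point $u\in H^1_0(\Omega)$ with $\Phi(u)=c^*>0$, so $u\not\equiv 0$. Positivity is automatic: since only $u^+$ appears in $\Phi$, on $\{u<0\}$ the Euler--Lagrange equation reduces to $-\Delta u=0$, which together with $u=0$ on $\partial\Omega$ forces $u\ge 0$; the strong maximum principle then gives $u>0$ in $\Omega$, completing the proof.
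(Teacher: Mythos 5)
Your proposal is correct and follows essentially the same route as the paper: a mountain-pass setup with the functional $\varPhi$, Palais--Smale compactness below the Sobolev threshold $\tfrac1N S_N^{N/2}$ (the paper carries this out explicitly by deriving a contradiction, via the quantities $A$, $B$, $C$, from assuming the weak limit vanishes), and Aubin--Talenti bubbles concentrated at an interior point $x_0\neq 0$. The paper simply asserts the strict inequality $\sup_t\varPhi(tv_\mu)<\tfrac1N S_N^{N/2}$ as ``not difficult to show,'' whereas you supply the missing exponent bookkeeping ($\varepsilon^{N-2}$, $\varepsilon^{s}$, $\varepsilon^{N-(N-2)(p+1)/2}$, and the equivalences $p>2^*(s)-1\iff N-(N-2)(p+1)/2<s$ and $N\geq4,\ p>1\Rightarrow N-(N-2)(p+1)/2<N-2$), which is a useful completion of the argument rather than a different one.
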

\begin{proof}
Let
\begin{equation*}
\varPhi(u):=\frac{1}{2}\int_\Omega|\nabla u|^2dx+\frac{1}{2^*(s)}\int_\Omega\frac{(u^+)^{2^*(s)}}{|x|^s}dx
-\frac{1}{p+1}\int_\Omega (u^+)^{p+1}dx-\frac{N-2}{2N}\int_\Omega (u^+)^{\frac{2N}{N-2}}dx.
\end{equation*}
Choose $0\ne x_0\in\Omega$ and
\begin{equation*}
v_\mu(x)=\phi(x)\left(\frac{\mu}{1+\mu^2|x-x_0|^2}\right)^{\frac{N-2}{2}}
\end{equation*}
for large $\mu>0$, where $\phi(x)$ is a cut-off function near $x_0$. Then it is not difficult to show that
\begin{equation*}
\sup_{t\geq 0}\varPhi(t v_\mu)=\varPhi(t_0 v_\mu)<\frac{1}{N}S_N^{\frac{N}{2}}
\end{equation*}
provided that $2^*(s)-1<p<\frac{N+2}{N-2}$ and $\mu$ is sufficiently large. Let $u_0=t_0 v_\mu$, and
\begin{equation*}
c_*=\inf_{P\in{\cal P}}\max_{w\in P}\varPhi(w),
\end{equation*}
where ${\cal P}$ is the class of continuous paths in $H^1_0(\Omega)$ connecting $0$ and $u_0$. Then it is easy to see
\begin{equation}\label{42equation}
0<c_*<\frac{1}{N}S_N^{\frac{N}{2}}.
\end{equation}
We claim: $c_*$ is a critical value for $\varPhi$. By the deformation lemma\,(see lemma in \cite{}), there exists $u_j\in H^1_0(\Omega)$ such that
\begin{equation}\label{43equation}
\begin{cases}
&\varPhi(u_j)=c_*(1+o(1)),\\
&\int_\Omega |\nabla u_j|^2dx+\int_\Omega\frac{(u_j^+)^{2^*(s)}}{|x|^s}dx-\int_\Omega(u_j^+)^{p+1}dx-\int_\Omega (u_j^+)^{\frac{2N}{N-2}}dx=o(1)\|u_j\|_{H^1_0},
\end{cases}
\end{equation}
where $o(1)\to 0$ as $j\to+\infty$. By \eqref{42equation} and \eqref{43equation}, we have
\begin{equation*}
\int_\Omega |\nabla u_j|^2dx\leq C
\end{equation*}
for some constant $C$ independent of $j$. By passing to a subsequence if necessary, we assume
\begin{equation*}
\begin{cases}
&u_j\rightharpoonup u\quad\text{in \,}H^1_0(\Omega),\\
&u_j\to u\quad\text{in \,}L^{p+1}(\Omega).
\end{cases}
\end{equation*}
If $u\ne 0$, then it is easy to see $u$ is a solution of \eqref{41equation} and $\varPhi(u)=c_*$. Hence, it remains to show that $u\not\equiv 0$.
We prove it by contradiction.

\medskip

Now suppose $u\equiv 0$, and set
\begin{equation*}
A=\underline{\lim}_{j\to +\infty}\int_\Omega|\nabla u_j|^2dx,\quad B=\underline{\lim}_{j\to+\infty}\int_\Omega\frac{(u_j^+)^{2^*(s)}}{|x|^s}dx\text{ \,and \,}
C=\underline{\lim}_{j\to +\infty}\int_\Omega(u_j^+)^{\frac{2N}{N-2}}dx.
\end{equation*}
Then \eqref{43equation} implies
\begin{equation}\label{45equation}
c_*=\frac{A}{2}+\frac{B}{2^*(s)}-\frac{N-2}{2N}C,
\end{equation}
and
\begin{equation}\label{46equation}
C=A+B.
\end{equation}
By the Sobolev inequality, \eqref{46equation} implies
\begin{equation*}
C=A+B\geq A\geq S_N C^{1-\frac{2}{N}}.
\end{equation*}
Thus, we have
\begin{equation*}
C\geq S_N^{\frac{N}{2}}\text{ \,and \,}A\geq S_N C^{1-\frac{2}{N}}\geq S_N^{\frac{N}{2}}.
\end{equation*}
Then \eqref{45equation} yields
\begin{equation*}
c_*=(\frac{1}{2}-\frac{N-2}{2N})A+(\frac{1}{2^*(s)}-\frac{N-2}{2N})B\geq \frac{1}{N}S_N^{\frac{N}{2}},
\end{equation*}
a contradiction to \eqref{42equation}. Hence, Theorem\,\ref{41thm} is proved.
\end{proof}

\begin{align*}
& \text{YanYan Li}     &&  \qquad   &&&\text{Chang-Shou Lin} \\
&  \text{Department of Mathematics}                                  && \qquad &&&\text{Taida Institute for Mathematical Sciences} \\                
& \text{Rutgers University}   &&  \qquad  &&&\text{Department of Mathematics}  \\
&\text{Piscataway, NJ 08854, USA}   &&   \qquad &&&\text{National Taiwan University}   \\
& \text{yyli@math.rutgers.edu}   &&  \qquad &&&\text{Taipei 106, Taiwan}  \\
&   && \qquad  &&&\text{cslin@math.ntu.edu.tw}     \\  
\end{align*}

\end{document}